\newcommand{\bN}{\mathbf{N}}
\newcommand{\bR}{\mathbf{R}}
\newcommand{\sX}{\mathscr{X}}
\numberwithin{equation}{section}
\newtheorem{theorem}[equation]{Theorem}
\newtheorem{proposition}[equation]{Proposition}
\newtheorem{lemma}[equation]{Lemma}
\newtheorem{corollary}[equation]{Corollary}
\theoremstyle{definition}
\newtheorem{rmk}[equation]{Remark}
\newenvironment{remark}[1][]{\begin{rmk}[#1] \pushQED{\qed}}{\popQED \end{rmk}}
\newtheorem{eg}[equation]{Example}
\newenvironment{example}[1][]{\begin{eg}[#1] \pushQED{\qed}}{\popQED \end{eg}}
\newtheorem{defnaux}[equation]{Definition}
\newenvironment{definition}[1][]{\begin{defnaux}[#1]\pushQED{\qed}}{\popQED \end{defnaux}}
\let\ol\overline
\let\ul\underline
\let\defn\emph
\renewcommand{\phi}{\varphi}
\DeclareMathOperator{\Spec}{Spec}
\newcommand{\GL}{\mathbf{GL}}
\DeclareMathOperator{\rrk}{rrk}
\DeclareMathOperator{\str}{str}
\DeclareMathOperator{\Div}{Div}
\newcommand{\DOI}[1]{\href{http://doi.org/#1}{\color{purple}{\tiny\tt DOI:#1}}}
\newcommand{\arxiv}[1]{\href{http://arxiv.org/abs/#1}{{\tiny\tt arXiv:#1}}}
\author{Arthur Bik}
\address{Institute for Advanced Study, 1 Einstein Drive, Princeton NJ 08540, USA, and Max-Planck-Institute for Mathematics in the Sciences, Inselstraße 22, 04103 Leipzig, Germany}
\email{\href{mailto:mabik@ias.edu}{mabik@ias.edu}}
\urladdr{\url{http://arthurbik.nl}}
\thanks{AB is partially supported by Postdoc.Mobility Fellowship P400P2\_199196 from the Swiss National Science Foundation and a grant from the Simons Foundation (816048, LC)}
\author{Alessandro Danelon}
\address{Department of Mathematics and Computer Science, Technische Universiteit Eindhoven, Eindhoven, The Netherlands}
\email{\href{mailto:a.danelon@tue.nl}{a.danelon@tue.nl}}
\urladdr{\url{https://adanelon.win.tue.nl/}}
\thanks{AD is supported by Jan Draisma's Vici grant 639.033.514 from the Netherlands Organisation for scientific \hspace*{1em} research (NWO)\begin{minipage}{.018\textwidth}\includegraphics[width=\textwidth]{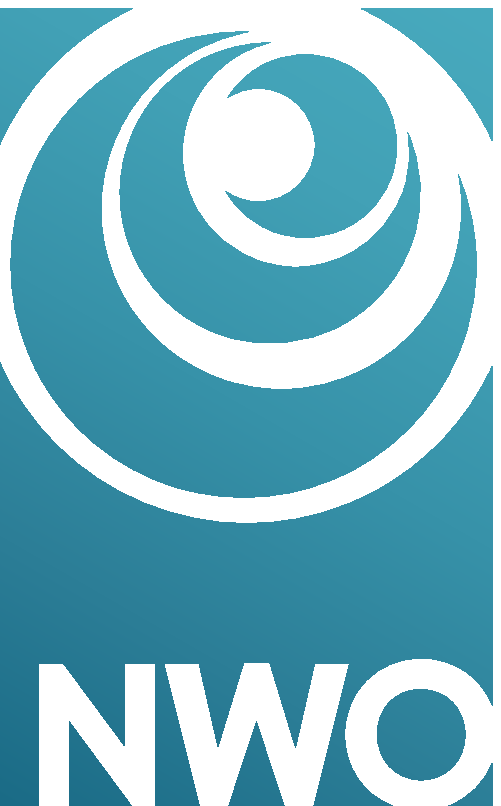}\end{minipage}.}
\author{Andrew Snowden}
\address{Department of Mathematics, University of Michigan, Ann Arbor, MI}
\email{\href{mailto:asnowden@umich.edu}{asnowden@umich.edu}}
\urladdr{\url{http://www-personal.umich.edu/~asnowden/}}
\title{Isogeny classes of cubic spaces}
\date{October 3, 2022}
\begin{document}

\begin{abstract}
A \emph{cubic space} is a vector space equipped with a symmetric trilinear form. Two cubic spaces are \emph{isogeneous} if each embeds into the other. A cubic space is \emph{non-degenerate} if its form cannot be expressed as a finite sum of products of linear and quadratic forms. We classify non-degenerate cubic spaces of countable dimension up to isogeny: the isogeny classes are completely determined by an invariant we call the \emph{residual rank}, which takes values in $\bN \cup \{\infty\}$. In particular, the set of classes is discrete and (under the partial order of embedability) satisfies the descending chain condition.
\end{abstract}

\maketitle
\tableofcontents

\section{Introduction}

For a vector space $V$ over a field $k$, let $P_n(V)=(\Div^n{V})^*$ be the space of degree $n$ polynomial functions on $V$. An \defn{$n$-space} is a vector space $V$ equipped with an element of $P_n(V)$. For example, a 2-space is a quadratic space. For $n>2$, the structure of $n$-forms or $n$-spaces in finite dimensions is quite complicated. However, in the past few years a number of results have been found that reveal more structure in infinite dimensions; see \S \ref{ss:related} for a summary. The purpose of this paper is to establish another result in this direction: we classify non-degenerate cubic spaces (3-spaces) up to isogeny.

\subsection{Background}

Let $V$ and $W$ be $n$-spaces. An \defn{embedding} $W \to V$ is a $k$-linear map such that the form on $V$ restricts to the form on $W$. An \defn{isomorphism} is a bijective embedding. We say that $V$ and $W$ are \defn{isogenous} if each embeds into the other\footnote{We have borrowed this term from \cite{Clark}.}. We define the \defn{strength} of $f \in P_n(V)$, denoted $\str(f)$, to be the infimum of those $s$ for which there is an expression $f = \sum_{i=1}^s g_i h_i$ where $g_i$ and $h_i$ are homogeneous elements of the ring $P(V)=\bigoplus_{j \ge 0} P_j(V)$ of smaller degree than $f$; if there is no such expression for any $s$ then $\str(f)=\infty$. We say that an $n$-space is \defn{non-degenerate} if its defining form has infinite strength.

Let $\sX_n$ be the set of isogeny classes of non-degenerate $n$-spaces of countable dimension. This set is partially ordered by embedability: $[W] \le [V]$ if there exists an embedding $W \to V$. To our knowledge, this  object was first studied in \cite{BDDE}, where the following results were established\footnote{In fact, \cite{BDDE} does not quite work with our notion of isogeny, but one can deduce these statements from the results of \cite{BDDE}.} (for $k$ of algebraically closed of characteristic~0):
\begin{itemize}
\item $\sX_n$ has unique maximal and minimal elements \cite[Theorem~2.9.1]{BDDE}.
\item $\sX_1$ and $\sX_2$ have cardinality one \cite[Example~5.1.2]{BDDE}.
\item $\sX_3$ has at least two elements \cite[Example~2.8.4]{BDDE}.
\end{itemize}
These results motivated us to study $\sX_3$, and this paper is the result of our work.

\subsection{Residual rank}

Let $V$ be a vector space with basis $\{v_i\}_{i \in I}$. We let $x_i \in P_1(V)=V^*$ be the dual vector to $v_i$. We refer to the $x_i$'s as \defn{coordinates} on $V$. Every element of $P_1(V)$ can be uniquely expressed as $\sum_{i \in I} c_i x_i$ where $c_i \in k$ are arbitrary scalars (possibly all non-zero); in other words, the $x_i$'s form a topological basis of the profinite vector space $P_1(V)$. More generally, every element of $P_n(V)$ is a formal $k$-linear combination of degree $n$ monomials in the $x_i$'s.

In what follows, we assume $\operatorname{char}(k) \ne 2,3$. Let $P^{\circ}_2(V)$ be the subspace of $P_2(V)$ consisting of quadratic forms of finite strength, and let $\ol{P}_2(V)$ be the quotient space $P_2(V)/P_2^{\circ}(V)$. The following definition introduces the key concept in this paper:

\begin{definition}
The \defn{residual rank} of $f \in P_3(V)$, denoted $\rrk(f)$, is the dimension of the subspace of $\ol{P}_2(V)$ spanned by the images of the derivatives $\frac{\partial f}{\partial x_i}$ for $i \in I$. The \defn{residual rank} of a cubic space $(V, f)$, denoted $\rrk(V)$, is $\rrk(f)$.
\end{definition}

The derivatives $\frac{\partial f}{\partial x_i}$ appearing above are computed formally: write $f$ as a linear combination of monomials, and then take the derivative of each monomial. The above definition is independent of the choice of basis; in fact, we give a basis-free definition in \S \ref{s:rrk}. We regard residual rank as an element of $\bN \cup \{\infty\}$ (as opposed to a cardinal). Here are two important examples:

\begin{example} \label{ex:rrk0}
Suppose $V$ has coordinates $\{x_i\}_{i \ge 1}$ and $f=\sum_{i=1}^{\infty} x_i^3$. Then $\frac{\partial f}{\partial x_i}=3 x_i^2$ belongs to $P_2^{\circ}(V)$, and thus maps to~0 in $\ol{P}_2(V)$. We therefore have $\rrk(f)=0$.
\end{example}

\begin{example} \label{ex:rrk-inf}
Let $V$ have coordinates $\{x_i\}_{i \ge 1} \cup \{y_{i,j}\}_{i,j \ge 1}$, and let $f=\sum_{i=1}^{\infty} x_i q_i$, where $q_i=\sum_{j=1}^{\infty} y_{i,j}^2$. Then $\frac{\partial f}{\partial x_i}=q_i$, and the $q_i$'s span an infinite dimensional subspace of $\ol{P}_2(V)$. Thus $\rrk(f)=\infty$.
\end{example}

\subsection{Statement of results}

We are finally ready to state our main theorem. In what follows, we assume that $k$ is algebraically closed of characteristic $\ne 2,3$.

\begin{theorem} \label{mainthm}
Let $V$ be a non-degenerate cubic space, and let $W$ be another cubic space of countable (or finite) dimension. Then $W$ embeds into $V$ if and only if $\rrk(W) \le \rrk(V)$.
\end{theorem}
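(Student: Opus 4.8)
The plan is to prove the two implications separately: the ``only if'' direction is a soft monotonicity statement, while ``if'' is the technical heart. For the ``only if'' direction I would show that $\rrk$ is monotone under embeddings. The key point is that pulling back a quadratic form along an embedding $\iota\colon W\to V$ preserves finite strength — a product of lower‑degree homogeneous forms pulls back to such a product, or to zero — so $\iota$ induces a well‑defined linear map $\ol{P}_2(V)\to\ol{P}_2(W)$. Choosing coordinates on $W$ and extending them to coordinates on $V$, the $W$‑derivatives of $f_W=\iota^*f_V$ are the restrictions of the corresponding $V$‑derivatives of $f_V$, so this induced map carries the span of the images of the $\partial f_V/\partial x_i$ onto a subspace containing the span of the images of the $\partial f_W/\partial x_j$; since a linear map cannot increase dimension, $\rrk(W)\le\rrk(V)$. (If $\iota$ is not injective, first quotient $W$ by $\ker\iota$, which lies in the radical of the trilinear form and so does not change the residual rank.)

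For ``if'' I would begin with two reductions. Residual rank is additive over orthogonal direct sums, and a cubic space stays non‑degenerate after taking its orthogonal direct sum with an infinite Fermat cubic $\sum_{i\ge1}x_i^3$ (which has infinite strength, by a known computation, and is a direct summand, so subadditivity of strength applies); hence replacing $W$ by $W\oplus(\text{Fermat})$ changes neither $\rrk(W)$ nor the statement, and we may assume $W$ is itself non‑degenerate. Likewise we may pass to a countable‑dimensional non‑degenerate subspace of $V$ of the same residual rank, so assume $\dim V\le\aleph_0$. Next I would introduce, for each $r\in\bN\cup\{\infty\}$, a \emph{standard model} $M_r$: the orthogonal direct sum of an infinite Fermat cubic with $r$ ``rank units'' $z_j\cdot\sum_\ell y_{j,\ell}^2$ as in Example~\ref{ex:rrk-inf}. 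Each $M_r$ is non‑degenerate with $\rrk(M_r)=r$, and the $M_r$ are manifestly totally ordered: $M_r$ embeds into $M_{r'}$ whenever $r\le r'$. The theorem then reduces to: (A) every non‑degenerate $W$ with $\rrk(W)=s$ embeds into $M_s$; and (B) $M_t$ embeds into every non‑degenerate $V$ with $\rrk(V)=t$. Indeed (A) and (B) give $W\hookrightarrow M_{\rrk(W)}\hookrightarrow M_{\rrk(V)}\hookrightarrow V$, and en route show every non‑degenerate space of residual rank $t$ is isogenous to $M_t$.

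Both (A) and (B) I would prove by the same mechanism: build an embedding of the (countable‑dimensional) source as a direct limit of embeddings of an exhausting chain of finite‑dimensional subspaces $U_0\subseteq U_1\subseteq\cdots$, extending one dimension at a time. Extending an embedding $\iota\colon U\hookrightarrow V$ by a vector $w$ (with $U'=U\oplus kw$ a subspace of the source, and $T$, $f$ the form on the source) amounts to finding $v\in V$ with $T_V(\iota u,\iota u,v)=T(u,u,w)$, $T_V(\iota u,v,v)=T(u,w,w)$ and $f_V(v)=f(w)$ for all $u\in U$ — a quadratic equation, a linear equation, and a scalar equation on $U$. The core technical ingredient is an \emph{extension lemma}: this system is solvable, and can be solved so that the enlarged embedding is again good enough to continue, provided $\iota$ is in sufficiently general position and the residual rank of $V$ is not yet exhausted. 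The quadratic equation is handled using non‑degeneracy of $V$ (which makes the linear map $v\mapsto T_V(\iota u,\iota u,v)\in P_2(U)$ surjective once the image of $\iota$ is spread across enough independent directions of $V$); the remaining equations then cut an algebraic subvariety of the (infinite‑dimensional) fibre, non‑empty over the algebraically closed field $k$. For (B) one also needs, as an input, that an infinite Fermat cubic embeds into every non‑degenerate cubic space (a minimality statement for infinite‑strength forms), after which the $r$ rank units are glued on one at a time, each step consuming exactly one unit of the residual rank available in the complement; by monotonicity this many units are necessary, and the extension lemma supplies that they suffice, with infinitely many available when $\rrk(V)=\infty$.

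I expect the main obstacle to be isolating the correct ``general position''/ampleness condition on the partial embeddings and proving that it propagates under the extension step. A blind greedy extension genuinely fails: one can place a finite‑dimensional subspace inside the Fermat cubic so badly that it admits no extension by even one more dimension, so the induction must be performed with foresight. The right invariant must record not only enough general position for the quadratic equations to be solvable, but also how much of the target's residual rank has already been consumed — so that all of the infinitely many future extensions can still be carried out — and the hypothesis $\rrk(W)\le\rrk(V)$ must be exactly what guarantees this budget is never overspent, i.e. that the ``residual structure at infinity'' of $W$ can be matched inside $V$. Getting this bookkeeping right is where essentially all of the difficulty lies; once the invariants are fixed, the individual steps reduce to solvability of explicit Waring‑type polynomial systems over $k$.
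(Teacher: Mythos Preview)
Your overall architecture is exactly the paper's: monotonicity of $\rrk$ for ``only if'', and for ``if'' the factorization through standard models $M_r$ (the paper's $V(r)$ and $V(\infty)$), proving separately that (A) any $W$ with $\rrk(W)\le r$ embeds into $M_r$ and (B) $M_r$ embeds into any non-degenerate $V$ with $\rrk(V)=r$. The paper handles (A) more simply than you propose: rather than a step-by-step extension, it writes the form on $W$ as $\sum_i w_i g_i$ (with $w_i$ appearing only in $g_1,\dots,g_i$), diagonalizes the at most $r$ forms $g_i$ of infinite strength using Proposition~\ref{prop:qmax}, and writes each remaining $w_ig_i$ (a linear form times a finite-strength quadratic) as a finite sum of cubes of linear forms; this yields an explicit specialization of the coordinates of $M_r$, with no induction and no non-degeneracy hypothesis on $W$. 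Only (B) is carried out by the step-by-step ``good table'' extension you sketch.

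The genuine gap is the solvability of the extension system. You assert that the quadratic and cubic conditions ``cut an algebraic subvariety of the (infinite-dimensional) fibre, non-empty over the algebraically closed field $k$'', and later that the steps reduce to ``Waring-type polynomial systems''. Neither is sufficient: algebraic closedness alone does not force such a system to have a solution, and nothing here is a Waring problem. What is actually needed is \emph{universality of strength} (Theorem~\ref{thm:univ} and Corollary~\ref{cor:univ}): if $f,q_1,\dots,q_s$ have infinite collective strength, then $v\mapsto(f(v),q_1(v),\dots,q_s(v))$ is surjective onto $k^{s+1}$, even after imposing finitely many linear constraints and finitely many \emph{finite-strength} quadratic vanishing conditions. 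This is a substantive input (resting on Ananyan--Hochster--type regularity), and it is precisely what determines the ``general position'' invariant you are looking for: the table must track which directional derivatives $f_a$ have infinite strength (so their values can be prescribed arbitrarily) versus finite strength (so only the value $0$ may be imposed). That bookkeeping is exactly condition~(c) in the paper's definition of a good table; in the infinite-rank case, adding a new row also requires a separate argument producing $x$ with $\ol q(x)\notin\ol Q_0$, which does not come from Corollary~\ref{cor:univ} but from a dimension count inside a subspace mapping onto a complement $\ol Q_1$. A secondary point: your reduction ``pass to a countable-dimensional non-degenerate subspace of $V$ of the same residual rank'' is neither justified nor needed---the paper's table argument works for $V$ of arbitrary dimension.
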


As a corollary, we obtain a precise description of $\sX_3$:

\begin{corollary} \label{maincor}
The map
\begin{displaymath}
\rrk \colon \sX_3 \to \bN \cup \{\infty\}
\end{displaymath}
is a well-defined isomorphism of posets.
\end{corollary}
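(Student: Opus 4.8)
The plan is to read Corollary~\ref{maincor} off from Theorem~\ref{mainthm}. The three assertions packaged in ``isomorphism of posets'' --- that $\rrk$ is well-defined on isogeny classes, that it is an order-embedding, and that it is a bijection --- all follow formally, and the only step requiring a construction is surjectivity. First I would check that $\rrk$ descends to a map $\sX_3 \to \bN \cup \{\infty\}$: every element of $\sX_3$ is represented by a non-degenerate cubic space of countable dimension, so given an isogeny between two such representatives $V$ and $W$, Theorem~\ref{mainthm} applies with \emph{either} of them as the ambient space; reading it both ways gives $\rrk(W) \le \rrk(V)$ and $\rrk(V) \le \rrk(W)$, hence $\rrk(V) = \rrk(W)$.

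Next, for classes $[V],[W] \in \sX_3$, the relation $[W] \le [V]$ means by definition that a representative of $W$ embeds into a representative of $V$ (independently of the representatives, since isogenous spaces embed into one another); since $V$ is non-degenerate and $W$ has countable dimension, Theorem~\ref{mainthm} converts this into $\rrk(W) \le \rrk(V)$. Thus $[W] \le [V] \iff \rrk(W) \le \rrk(V)$, so $\rrk$ is an order-embedding of $\sX_3$ into the chain $\bN \cup \{\infty\}$; in particular it is injective, because equal residual ranks produce mutual embeddings and hence equal isogeny classes. It therefore remains only to show that every value $r \in \bN \cup \{\infty\}$ is attained.

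For surjectivity I would use a single family. Fix coordinates $\{x_i\}_{i \ge 1} \cup \{y_j\}_{1 \le j \le r} \cup \{z_{j,k}\}_{1 \le j \le r,\ k \ge 1}$ --- reading $1 \le j \le r$ as $j \ge 1$ when $r = \infty$ and as vacuous when $r = 0$ --- and set
\begin{displaymath}
f_r \;=\; \sum_{i \ge 1} x_i^3 \;+\; \sum_{1 \le j \le r} y_j\, q_j, \qquad q_j \;=\; \sum_{k \ge 1} z_{j,k}^2 .
\end{displaymath}
The formal derivatives $\partial f_r/\partial x_i = 3x_i^2$ and $\partial f_r/\partial z_{j,k} = 2 y_j z_{j,k}$ have strength $\le 1$ and so vanish in $\ol{P}_2$, whereas $\partial f_r/\partial y_j = q_j$; as the $q_j$ involve pairwise disjoint variables and any nonzero $k$-linear combination of them is, after rescaling coordinates, an infinite sum of squares of linearly independent linear forms (hence of infinite strength), the $q_j$ are linearly independent in $\ol{P}_2$, so $\rrk(f_r) = r$. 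Finally, $f_r$ is non-degenerate: restricting to the span of the basis vectors dual to the $x_i$ sends $f_r$ to $\sum_{i \ge 1} x_i^3$, and strength cannot increase under restriction to a subspace (a homogeneous factor of positive degree $<3$ restricts to one of the same degree, or to $0$), so $\str(f_r) \ge \str\bigl(\sum_{i \ge 1} x_i^3\bigr) = \infty$, using the standard fact that an infinite sum of cubes has infinite strength (cf.\ Example~\ref{ex:rrk0}). Combining well-definedness, the order-embedding property, and surjectivity proves the corollary.

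The real content here is entirely in Theorem~\ref{mainthm}; the corollary is soft. Within this argument the only non-formal point is the non-degeneracy of the $f_r$, and that in turn rests only on monotonicity of strength under restriction to a subspace together with the known infinite strength of $\sum_i x_i^3$.
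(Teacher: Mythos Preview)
Your argument is correct and mirrors what the paper does implicitly (the corollary is stated without a separate proof, but all the pieces are in place). Two minor streamlining remarks. First, for well-definedness you invoke Theorem~\ref{mainthm} in both directions, but Proposition~\ref{prop:rrk-embed} already gives $\rrk(W)\le\rrk(V)$ from a single embedding $W\hookrightarrow V$, so mutual embeddability forces equality of residual ranks without the full theorem. Second, your family $f_r$ is, up to relabelling, precisely the paper's $V(r)$ (Definition~\ref{dfn:finiteAle}) together with $V(\infty)$; the paper computes their residual ranks via Proposition~\ref{prop:rrk-formula} and verifies non-degeneracy by the same specialization-to-$\sum z_i^3$ argument you give, so you could simply cite those constructions. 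One small misattribution: Example~\ref{ex:rrk0} computes the residual rank of $\sum_i x_i^3$, not its strength; the infinite strength of that form is used as a known fact elsewhere in the paper (e.g.\ after Definition~\ref{dfn:finiteAle}).
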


This corollary clarifies the maximal/minimal result \cite[Theorem~2.9.1]{BDDE}: the minimal class in $\sX_3$ consists of spaces with residual rank zero (such as the one in Example~\ref{ex:rrk0}), while the maximal class consists of spaces with infinite residual rank (such as the one in Example~\ref{ex:rrk-inf}). The corollary also shows that $\sX_3$ is discrete, in the sense that there are not continuous families of isogeny classes, and that $\sX_3$ satisfies the descending chain condition.

\subsection{Higher degrees}
For $n\leq 3$, the set $\sX_n$ is discrete, totally ordered and consists of classes which can be distinguished using invariants that have a simple description. One can wonder whether these statements hold in general. To showcase some of the complexity arising in $\sX_n$ for $n\geq4$ we consider for homogeneous polynomials $p_1,\ldots,p_r\in k[x_1,x_2,\ldots]_{n-2}$ the following two associated objects:
\begin{enumerate}
\item the vector space $\mathrm{span}\{p_1,\ldots,p_r\}$
\item the $n$-space $V_{p_1,\ldots,p_n}$ with coordinates $x_i,y_{j,k},z_{\ell}$ for $i,j,k,\ell\in\mathbb{N}$ and $n$-form
\[
p_1g_1+\ldots+p_rg_r+h,
\]
where $g_j=\sum_{k\in\mathbb{N}}y_{j,k}^2$ and $h=\sum_{\ell\in\mathbb{N}}z_{\ell}^n$.
\end{enumerate}
We say that a finite dimensional subspace $U$ of $k[x_1,x_2,\ldots]_{n-2}$ \textit{specializes} to another subspace $U'$ when $U'=\{p(y_1,y_2,\ldots)\mid p\in U\}$ for some linear $y_1,y_2,\ldots\in k[x_1,x_2,\ldots]$. Two subspaces are \emph{isogeneous} when they both specialize to the other. We have the following result.

\begin{proposition}
Let $p_1,\ldots,p_r,q_1,\ldots,q_s\in k[x_1,x_2,\ldots]_{n-2}$ be homogeneous polynomials. Then $V_{p_1,\ldots,p_r},V_{q_1,\ldots,q_s}$ are isogeneous if and only if $\mathrm{span}\{p_1,\ldots,p_r\},\mathrm{span}\{q_1,\ldots,q_s\}$ are. 
\end{proposition}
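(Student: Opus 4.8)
The plan is to split the biconditional using that isogeny means mutual embeddability, and to prove the two one-sided statements: \textbf{(A)} if $\mathrm{span}(\bp):=\mathrm{span}\{p_1,\dots,p_r\}$ specializes to $\mathrm{span}(\bq):=\mathrm{span}\{q_1,\dots,q_s\}$ then $V_{\bq}$ embeds into $V_{\bp}$; and \textbf{(B)} if $V_{\bq}$ embeds into $V_{\bp}$ then $\mathrm{span}(\bq)$ is contained in some specialization of $\mathrm{span}(\bp)$. Granting these, one direction of the proposition follows by applying (A) to both specializations. For the other direction, if $V_{\bp}$ and $V_{\bq}$ are isogenous then (B) applied to the two embeddings gives $\mathrm{span}(\bq)\subseteq U'$ and $\mathrm{span}(\bp)\subseteq U''$ with $U'$, $U''$ specializations of $\mathrm{span}(\bp)$, $\mathrm{span}(\bq)$ respectively; since specializing never raises dimension, $\dim\mathrm{span}(\bq)\le\dim U'\le\dim\mathrm{span}(\bp)\le\dim U''\le\dim\mathrm{span}(\bq)$, so $\mathrm{span}(\bq)=U'$ and likewise $\mathrm{span}(\bp)=U''$, i.e.\ the two spans are isogenous.

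For (A), I would take a substitution $x_a\mapsto\tilde x_a$ realizing the specialization, so $\mathrm{span}\{p_i(\tilde x_1,\tilde x_2,\dots)\}=\mathrm{span}(\bq)$, and write $q_m=\sum_i d_{mi}\,p_i(\tilde x_1,\tilde x_2,\dots)$. I then define $\phi^*$ on the coordinates of $V_{\bp}$ by $x_a\mapsto\tilde x_a$, $z_\ell\mapsto z_\ell$, $y_{i,k}\mapsto\ell_{i,k}$ for $i\le r$ and $y_{j,k}\mapsto 0$ for $j>r$, where the $\ell_{i,k}$ are linear forms in the $y$-variables with $\sum_k\ell_{i,k}^2=\sum_m d_{mi}\,g_m$; these exist because $\sum_m d_{mi}\,g_m=\sum_{m,k} d_{mi}\,y_{m,k}^2$ is a quadratic form and $k$ is algebraically closed, so it is a countable sum of squares of linear forms. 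This extends to an embedding $V_{\bq}\to V_{\bp}$, since $\phi^*(F_{\bp})=\sum_i p_i(\tilde x)\sum_k\ell_{i,k}^2+h=\sum_i p_i(\tilde x)\sum_m d_{mi}\,g_m+h=\sum_m q_m g_m+h=F_{\bq}$.

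For (B), I would introduce a finite-dimensional refinement of residual rank. For an $n$-space $(V,F)$, let $\eta\cdot F\in P_2(V)$ denote the contraction of $F$ with $\eta\in\Div^{n-2}(V)$ (the degree-$(n-2)$ analogue of the derivatives used to define residual rank), and let $\Psi_V\colon\Div^{n-2}(V)\to\ol{P}_2(V)$ send $\eta$ to the class of $\eta\cdot F$; put $T(V):=(\ker\Psi_V)^{\perp}\subseteq P_{n-2}(V)$. I would check two things. First, $T(V_{\bp})=\mathrm{span}(\bp)$: contracting $\sum_i p_ig_i+h$ with an $\eta$ supported on the $x$-coordinates yields $\sum_i\langle p_i,\eta\rangle\,g_i$, while an $\eta$ touching any $y$- or $z$-coordinate contributes a quadratic form in finitely many $x$-coordinates (or $0$), hence one of finite strength; since a nonzero combination of the $g_i$ has infinite rank and so infinite strength, the classes $\ol{g}_1,\dots,\ol{g}_r$ are linearly independent in $\ol{P}_2(V_{\bp})$, whence $\ker\Psi_{V_{\bp}}=\{\eta:\langle p_i,\eta\rangle=0\text{ for all }i\}$ and $T(V_{\bp})=\mathrm{span}(\bp)$ (and similarly $T(V_{\bq})=\mathrm{span}(\bq)$). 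Second, $T$ is functorial along embeddings: contraction is natural and pullback preserves finite strength, so for an embedding $\phi\colon W\to V$ one has $\Psi_W=\phi^*\circ\Psi_V\circ\phi_*$ (with $\phi_*$ the induced map on $\Div^{n-2}$ and $\phi^*$ descending to $\ol{P}_2$), which forces $T(W)\subseteq\phi^*(T(V))$ whenever $T(V)$ is finite-dimensional. Now, given an embedding $\phi\colon V_{\bq}\to V_{\bp}$ with $\xi_a:=\phi^*(x_a)$, this yields $\mathrm{span}(\bq)\subseteq\phi^*(\mathrm{span}(\bp))=\mathrm{span}\{p_1(\xi_1,\xi_2,\dots),\dots,p_r(\xi_1,\xi_2,\dots)\}$; composing with the algebra map $P(V_{\bq})\to k[x_1,x_2,\dots]$ that kills the $y$- and $z$-coordinates and fixes the $x$-coordinates fixes every $q_m$ and sends $p_i(\xi_1,\xi_2,\dots)$ to $p_i(\ol{\xi}_1,\ol{\xi}_2,\dots)$, where $\ol{\xi}_a$ is the $x$-part of $\xi_a$, so $\mathrm{span}(\bq)$ lies in the specialization $\{p(\ol{\xi}_1,\ol{\xi}_2,\dots):p\in\mathrm{span}(\bp)\}$ of $\mathrm{span}(\bp)$.

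The main obstacle is (B). Conceptually, the one-sided biconditional ``$V_{\bq}$ embeds into $V_{\bp}$ if and only if $\mathrm{span}(\bp)$ specializes to $\mathrm{span}(\bq)$'' fails — for $n=4$, $V_{x_1^2+x_2^2}$ embeds into $V_{x_1^2,\,x_2^2}$ although $\langle x_1^2,x_2^2\rangle$ does not specialize to the non-square $\langle x_1^2+x_2^2\rangle$ — so only ``contained in a specialization'' survives from a single embedding, and one is forced to combine both embeddings with the dimension count. Technically, the substance is in the two facts above: the computation $T(V_{\bp})=\mathrm{span}(\bp)$ and the functoriality $T(W)\subseteq\phi^*(T(V))$, both of which rest on pullbacks preserving finite strength and on the infinite sums of squares $g_i$ remaining linearly independent in $\ol{P}_2$.
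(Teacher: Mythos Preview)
The paper states this proposition in \S 1.4 without giving a proof, so there is no argument in the paper to compare against. Your approach is sound and the argument is essentially correct; a couple of small points deserve tightening.

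For (A), the construction works. The only omitted check is row-finiteness of $\phi^*$ on the $x$-block: since the $p_i$ are polynomials in finitely many variables, you may take $\tilde{x}_a=0$ for all sufficiently large $a$, and then each coordinate of $V_{\bq}$ occurs in only finitely many images.

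For (B), your invariant $T(V)=(\ker\Psi_V)^{\perp}$ is exactly the right refinement of the $(n-2)$-th residual rank (which the paper introduces immediately after the proposition): it records a subspace rather than merely a dimension. The computation $T(V_{\bp})=\mathrm{span}(\bp)$ is correct, resting on the linear independence of $\ol{g}_1,\ldots,\ol{g}_r$ in $\ol{P}_2$ and the double-annihilator identity for finite-dimensional subspaces of a full dual. The functoriality $T(W)\subseteq\phi^*(T(V))$ follows from the general identity $(\alpha^{-1}(K))^{\perp}=\alpha^*(K^{\perp})$ for a linear map $\alpha$ with subspace $K$ of the target, and does not need $T(V)$ finite-dimensional. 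One genuine loose end in your last step: the $\ol{\xi}_a\in P_1(V_{\bq})$ could a priori be infinite linear combinations of the $x_i$, whereas the paper's definition of specialization demands linear forms in $k[x_1,x_2,\ldots]$. This is easy to repair: since all of $q_1,\ldots,q_s$ together involve only finitely many variables, say $x_1,\ldots,x_M$, compose with the further restriction to $k[x_1,\ldots,x_M]$; this fixes every $q_m$ and replaces each $\ol{\xi}_a$ by its truncation, a bona fide linear polynomial.

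Your example showing that the one-sided implication ``$V_{\bq}$ embeds into $V_{\bp}$ $\Rightarrow$ $\mathrm{span}(\bp)$ specializes to $\mathrm{span}(\bq)$'' fails, so that the dimension count using both embeddings is genuinely needed, is a nice and correct observation.
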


For a form $f\in P_n(V)$ and an integer $2\leq k\leq n-2$, we define the {\em $k$-th residual rank} of $f$, denoted $\rrk^{(k)}(f)$, to be the dimension of the subspace of $\ol{P}_{n-k}(V)$ spanned by the images of the $k$-th derivatives of $f$. The \defn{$k$-th residual rank} of an $n$-space $(V, f)$, denoted $\rrk^{(k)}(V)$, is $\rrk^{(k)}(f)$. The proposition shows that several interesting posets can be embedded into $\sX_n$. 

\begin{example}
Let $f\in k[x_1,x_2,\ldots]_2$ be any nonzero polynomial. Then $\rrk(V_f)=0$ and $\rrk^{(2)}(V_f)=1$. However the spans of two polynomials $f,g$ are isogeneous if and only if their ranks (as $2$-forms) coincide. This shows that even when the residual rank is $0$, the second residual rank is not enough to distinguish isogeny classes in $\sX_4$.
\end{example}

\begin{example}
Let $p,q\in k[x_1,x_2,\ldots]_2$ be linearly independent polynomials. Then $\mathrm{span}\{p,q\}$ is isogeneous to a nondegenerate pencil in $k[x_1,\ldots,x_m]_2$ where $m$ is the maximal rank of a form in $\mathrm{span}\{p,q\}$. Such pencils are classified by Segre symbols \cite{fms}. This shows that $\sX_4$ is not totally ordered.
\end{example}

\begin{example}
The poset of isogeny classes of cubic polynomials embeds into $\sX_5$. This shows that $\sX_5$ is not discrete.
\end{example}

\subsection{Why this class of spaces} \label{ss:class}

We defined $\sX_n$ using non-degenerate spaces of countable dimension. We now explain why we use this class of spaces. For finite dimensional spaces, isogeny is equivalent to isomorphism, and there is not much we can say about isomorphism classes of finite dimensional $n$-spaces. Thus we exclude these spaces.

If $V$ is a finite dimensional space then one can add countably many additional basis vectors to $V$ and extend the form by~0. This leads to a degenerate $n$-space of countable dimensional. Understanding isogeny classes of these spaces is as difficult as understanding isomorphisms classes of finite dimensional spaces. We therefore confine our attention to non-degenerate spaces to exclude such examples.

Finally, our arguments rely on enumerating a basis and using inductive arguments, which is why we do not allow uncountable dimension. It is known that quadratic spaces behave very differently in dimensions $\aleph_0$ and $\aleph_1$ \cite[Chapter~2]{Gross}, which suggests that results like Corollary~\ref{maincor} may fail in uncountable dimension.

\subsection{Related work} \label{ss:related}

As we stated, infinite dimensional $n$-spaces, or closely related ideas, have appeared in several recent pieces of work. We now discuss a few. In what follows, we assume $k$ is algebraically closed of characteristic~0 for simplicity.

The notion of strength was introduced by Ananyan and Hochster \cite{AH} in their proof of Stillman's conjecture. They showed that forms of high strength behave approximately like independent variables. Erman, Sam, and Snowden \cite{ess} established a limiting form of this principle: $P(V)$ is (isomorphic to) a polynomial ring, with the ``variables'' being elements of infinite strength. We use results from this circle of ideas in \S \ref{ss:univ}.

Kazhdan and Ziegler \cite{KaZ} showed that strength is universal, in the following sense: if $V$ is a non-degenerate $n$-space then any finite dimensional $n$-space embeds into $V$. Our Theorem~\ref{mainthm} can be seen as a strengthening of this in the $n=3$ case, as it also determines which countable dimensional cubic spaces embed into $V$. The result of Kazhdan and Ziegler was extended by Bik, Draisma, Danelon, and Eggermont \cite{BDDE} to more general tensor spaces. (We note that \cite{KaZ} and \cite{BDDE} do not use the language of $n$-spaces.) This result plays an important role in our proof of Theorem~\ref{mainthm}, and we slightly generalize it in \S \ref{ss:univ}.

Harman and Snowden \cite{homoten} show that there is a universal ultrahomogeneous $n$-space of countable dimension, which is unique up to isomorphism. (In fact, \cite{homoten} works with a more general class of tensor spaces.) This space belongs to the maximal isogeny class.

Let $V$ have dimension $\aleph_0$ and let $X=P_n(V)$, regarded as an infinite dimensional variety on which $\GL(V)$ acts. The equivariant geometry of $X$ has been studied in several recent papers, such as \cite{BDDE,polygeom,Draisma,des}. The most important result in this direction is Draisma's theorem that $X$ is equivariantly noetherian, i.e., the descending chain condition holds for $\GL(V)$-stable closed subsets. For example, the map sending $X\subset P_2(V)$ to the supremal rank of its elements is an isomorphim between the poset of a $\GL(V)$-stable closed subsets of $P_2(V)$ and $\mathbb{N}\cup\{\infty\}$. It would be interesting to understand if there is any connection between this result and our isomorphism between $\sX_3$ and $\mathbb{N}\cup\{\infty\}$.

\subsection{Notation}

We fix an algebraically closed field $k$ throughout. In \S \ref{s:prelim}, we allow any characteristic, but thereafter we exclude characteristics~2 and~3.

\subsection*{Acknowledgments}

We thank Steven Sam for helpful conversations.

\section{Preliminaries} \label{s:prelim}

In this section, $k$ is an algebraically closed field of any characteristic.

\subsection{Embeddings of \textit{n}-spaces}

Let $V$ be a vector space with basis $\{v_i\}_{i \in I}$ and let $x_i$ be dual to $v_i$.
Let $W$ be a vector space with basis $\{w_j\}_{j \in J}$ and let $y_j$ be dual to $w_j$.
Let $\phi \colon W \to V$ be a linear map. Thus $\phi$ assigns to each $w_j$ a corresponding element $\phi(w_j)$ of $V$, which is a finite linear combination of basis vectors. We thus see that $\phi$ can be represented by a $I \times J$ matrix which is column-finite.

The map $\phi$ induces a dual map $\phi^* \colon P_1(V) \to P_1(W)$. Each $\phi^*(x_i)$ can be an infinite linear combination of $y_j$'s, however, each $y_j$ appears with non-zero coefficient in only finitely many $\phi^*(x_i)$'s. In other words, the $J \times I$ matrix representing $\phi^*$ is row-finite. (Of course, this matrix is just the transpose of the matrix for $\phi$.) One can reverse this: a row-finite $J \times I$ matrix defines a linear map $P_1(V) \to P_1(W)$ that is the dual of a unique linear map $W \to V$.

Suppose now that $V$ and $W$ are $n$-spaces, with forms $f(x_{\bullet})$ and $g(y_{\bullet})$. Using the perspective of the previous paragraph, we see that to give an embedding $W \to V$ amounts to giving a linear form $\ell_i \in P_1(W)$ for each $i \in I$ such that each $y_j$ appears in only finitely many $\ell_i$'s and $g(y_{\bullet})=f(\ell_{\bullet})$.
In this case we say that $f$ \textit{specializes} to $g$.
We will use this perspective to define some embeddings when we prove Theorem~\ref{mainthm}.

\subsection{Universality of strength} \label{ss:univ}

Let $V$ be a vector space over $k$. Recall that $P(V)=\bigoplus_{n \ge 0} P_n(V)$. We say that a collection of elements of $P(V)$ has \defn{infinite collective strength} if every non-trivial homogeneous $k$-linear combination has infinite strength. We will require the following theorem, which is a special case of universality of strength (see Remark~\ref{rmk:univ}).

\begin{theorem} \label{thm:univ}
Let $f_1, \ldots, f_r$ be homogeneous elements of $P(V)$ of positive degree and infinite collective strength, and let $c_1, \ldots, c_r \in k$. Then there exists a $v \in V$ with $f_i(v)=c_i$ for all $1 \le i \le r$.
\end{theorem}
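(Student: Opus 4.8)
The plan is to show that the polynomial map $\Phi\colon V\to k^r$, $v\mapsto(f_1(v),\dots,f_r(v))$, is surjective, where I write $d_i=\deg f_i$. It is worth observing at the outset that this is equivalent to saying that $f_1,\dots,f_r$ specializes to the tuple $(c_1z^{d_1},\dots,c_rz^{d_r})$ in a single variable $z$: after fixing a basis $\{v_j\}$ of $V$ with dual coordinates $\{x_j\}$, a substitution $x_j\mapsto a_jz$ with only finitely many $a_j$ nonzero carries $f_i$ to $f_i(v)\,z^{d_i}$, where $v=\sum_j a_jv_j\in V$, so the condition $f_i(v)=c_i$ is exactly what is being asked. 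From this point of view Theorem~\ref{thm:univ} is a mixed-degree, multi-form version of Kazhdan--Ziegler universality \cite{KaZ,BDDE}, and one route is to push those methods through; below I sketch a more self-contained argument.

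First I would pass to a finite-dimensional subspace. Letting $U$ range over the finite-dimensional subspaces of $V$, directed by inclusion, we have $P_d(V)=\varprojlim_U P_d(U)$, and restriction of forms never increases strength. The key point is that strength is lower semicontinuous along this system, so that the collective strength of $f_1|_U,\dots,f_r|_U$ is unbounded as $U$ grows---otherwise some nonzero homogeneous combination $\sum_i\lambda_if_i$ would have all of its finite-dimensional restrictions of bounded strength, hence finite strength, contrary to hypothesis. Thus, for any prescribed $N$, I may fix a finite-dimensional $U\subseteq V$ with $\dim U\ge r$ on which $f_1|_U,\dots,f_r|_U$ have collective strength at least $N$; since $\Phi(U)\subseteq\Phi(V)$, it suffices to solve the system on such a $U$.

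Now set $n=\dim U$ and $h_i=f_i|_U\in k[x_1,\dots,x_n]$, adjoin a variable $t$, and put $\tilde h_i=h_i-c_it^{d_i}$, which is homogeneous of degree $d_i$ in $k[x_1,\dots,x_n,t]$. Using subadditivity of strength on forms of a fixed degree together with $\str(ct^{d})\le1$ for $d\ge2$---and that a nonzero linear form has infinite strength---the tuple $\tilde h_1,\dots,\tilde h_r$ has collective strength at least $N-1$. Provided $N$ was chosen large enough as a function of $d_1,\dots,d_r$ and $r$ alone---crucially \emph{not} of $n$, which is what the Ananyan--Hochster bounds \cite{AH} (see also \cite{ess}) permit---both $(h_i)_i$ and $(\tilde h_i)_i$ are then regular sequences. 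Consequently $V(\tilde h_1,\dots,\tilde h_r)\subseteq\mathbb{P}^n$ has dimension exactly $n-r\ge0$, whereas its intersection with the hyperplane $\{t=0\}$ equals $V(h_1,\dots,h_r)\subseteq\mathbb{P}^{n-1}$, of dimension $n-1-r$, which is strictly smaller; hence $V(\tilde h_\bullet)\not\subseteq\{t=0\}$. Choosing a point $[v:s]$ on $V(\tilde h_\bullet)$ with $s\ne0$ and using homogeneity gives $h_i(v/s)=c_i$ for all $i$, and $v/s\in U\subseteq V$ is the required vector.

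The codimension count itself is elementary; the weight of the argument is carried by two external inputs. The first is the lower semicontinuity of strength---equivalently, that the locus of forms of strength $\le s$ is closed and compatible with the inverse system $P_d(V)=\varprojlim_U P_d(U)$---which powers the reduction to finite dimensions. The second is the Ananyan--Hochster estimate that a tuple of forms of bounded degree and sufficiently large collective strength is a regular sequence, with a strength bound depending only on the degrees and the number of forms. I expect this second input to be the real obstacle: it is exactly what stops $V(h_1,\dots,h_r)$ from being too large, and it is the reason the hypothesis of the theorem is phrased via collective strength rather than anything weaker.
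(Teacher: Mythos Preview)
Your argument is correct and shares the paper's skeleton: reduce to a finite-dimensional subspace on which the restricted forms are a regular sequence, then deduce surjectivity of $v\mapsto(f_1(v),\dots,f_r(v))$. The differences are in how each step is executed.

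For the reduction, the paper simply cites \cite[Theorem~5.5]{ess}, which directly produces a finite-dimensional $W\subseteq V$ on which the restrictions form a regular sequence. You instead argue that the collective strength of the restrictions is unbounded and then invoke the Ananyan--Hochster bound. This works, but note that the sentence ``otherwise some nonzero homogeneous combination $\sum_i\lambda_if_i$ would have all of its finite-dimensional restrictions of bounded strength'' hides a compactness step: a priori the witnessing $\lambda$ depends on $U$, and one needs that the closed loci $\{[\lambda]:\str(\sum\lambda_if_i|_U)\le N-1\}$ in the (finite, Noetherian) coefficient space have nonempty intersection over the directed system of $U$'s. That, together with the inverse-limit compatibility of the strength-$\le s$ loci you flag at the end, is exactly what \cite[Theorem~5.5]{ess} packages; so your two ``external inputs'' amount to unpacking the paper's single citation.

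For the surjectivity, the paper observes that a homogeneous regular sequence $g_1,\dots,g_r$ makes $k[T_1,\dots,T_r]\to P(W)$ faithfully flat, hence $\Spec P(W)\to\bA^r$ is surjective on $k$-points. Your homogenization $\tilde h_i=h_i-c_it^{d_i}$ and projective dimension count is a legitimate alternative, more geometric in flavor, but it costs a second application of the Ananyan--Hochster bound (to the $\tilde h_i$) and hence forces you to control collective strength quantitatively rather than merely obtain a regular sequence once. The paper's faithful-flatness step is shorter and needs only the single regular sequence; your route has the virtue that the dimension comparison $n-r$ versus $n-1-r$ makes the existence of a solution with $t\ne 0$ completely transparent.
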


\begin{proof}
If $V$ is finite dimensional then the $f_i$'s are linearly independent forms of degree~1, and the result is obvious. Thus assume $V$ is infinite dimensional. By \cite[Theorem~5.5]{ess}, there is a finite dimensional subspace $W$ of $V$ such that, letting $g_i$ be the restriction of $f_i$ to $W$, the forms $g_1, \ldots, g_r$ are a regular sequence in $P(W)$. (Our $P(V)$ is the ring $\bR$ of \cite{ess}, with $A=k$. We note that \cite{ess} works in the case where $V$ has countable dimension, but the argument applies in general.)

Since the $g_i$'s are a homogeneous regular sequence, the $k$-algebra map $k[T_1, \ldots, T_r] \to P(W)$ given by $T_i \mapsto g_i$ is faithfully flat. (This is standard; see \cite[Lemma~3]{AH0}, for instance.) It follows that the map $\Spec(P(W)) \to \Spec(k[T_1, \ldots, T_r])$ of schemes is surjective, and so it is surjective on $k$-points. On $k$-points, this is exactly the map $W \to k^r$ given by $v \mapsto (g_1(v), \ldots, g_r(v))$. Thus the result follows.
\end{proof}

We will mostly use the following variant of the theorem:

\begin{corollary} \label{cor:univ}
Let $V$ be a vector space with:
\begin{itemize}
\item A cubic form $f \in P_3(V)$ of infinite strength.
\item Quadratic forms $q_1, \ldots, q_r \in P_2(V)$ of infinite collective strength.
\item Quadratic forms $q'_1, \ldots, q'_s \in P_2(V)$ each of finite strength.
\item Linear forms $\ell_1, \ldots, \ell_t \in P_1(V)$.
\end{itemize}
Given elements $a, b_1, \ldots, b_r$ in $k$, we can find a $v \in V$ such that
\begin{displaymath}
f(v)=a, \quad q_i(v)=b_i, \quad q'_i(v)=0, \quad \ell_i(v)=0,
\end{displaymath}
for all $i$ that make sense.
\end{corollary}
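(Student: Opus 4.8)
The plan is to deduce Corollary~\ref{cor:univ} from Theorem~\ref{thm:univ} by a reduction that deals with the degenerate (finite strength) quadrics and with the possibility that the cubic form $f$ or the $q_i$'s fail to be collectively of infinite strength when thrown together with the linear forms. First I would observe that if $V$ is finite dimensional the hypotheses force $f=0$ and all the $q_i=0$ (a form of infinite strength on a finite dimensional space is zero, and likewise for collective strength), so there are no $q_i$'s and no cubic constraint to satisfy with a nonzero value unless $a=0=b_i$, in which case $v=0$ works; so assume $V$ is infinite dimensional.

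The main step is to pass to a subspace of $V$ on which the finite-strength quadrics $q'_i$ and the linear forms $\ell_i$ all vanish, while the relevant properties of $f$ and the $q_i$'s are preserved. Concretely, each $q'_i$ of finite strength can be written as $\sum_j a_{ij} b_{ij}$ with $a_{ij}, b_{ij} \in P_1(V)$; together with $\ell_1, \ldots, \ell_t$ this is a finite set of linear forms $\lambda_1, \ldots, \lambda_N \in P_1(V)$, and I would let $V' = \bigcap_m \ker(\lambda_m)$, a subspace of finite codimension. Restricting to $V'$ kills every $q'_i$ and every $\ell_i$. I then need: (i) the restriction of $f$ to $V'$ still has infinite strength, and (ii) the restrictions of $q_1, \ldots, q_r$ to $V'$ still have infinite collective strength. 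Both follow from the general fact that restricting a form to a subspace of finite codimension changes its strength by only a bounded amount — more precisely, if $U \subseteq V$ has codimension $c$, then for any homogeneous $h$ of degree $d$, $\str(h|_U) \ge \str(h) - c$; so a form of infinite strength (or a collection of infinite collective strength) restricts to one of infinite strength (resp.\ infinite collective strength). I would state and prove this codimension estimate as a short lemma (choosing coordinates so that $U$ is cut out by $c$ coordinates, and writing $h = h|_U + (\text{terms divisible by one of those } c \text{ coordinates})$, which exhibits at most $c$ extra products).

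Having reduced to $V'$, I apply Theorem~\ref{thm:univ} to the collection $\{f|_{V'}, q_1|_{V'}, \ldots, q_r|_{V'}\}$: I claim this collection has infinite collective strength. A nontrivial homogeneous $k$-linear combination is either a nonzero multiple of $f|_{V'}$ (possibly plus a linear combination of the $q_i|_{V'}$ — but these have different degrees, so homogeneity forces it to be purely one degree), hence has infinite strength by (i), or a nontrivial linear combination purely among the $q_i|_{V'}$, hence has infinite strength by (ii). So Theorem~\ref{thm:univ} furnishes a $v \in V'$ with $f(v) = a$ and $q_i(v) = b_i$ for all $i$; and since $v \in V'$ we automatically get $q'_i(v) = 0$ and $\ell_i(v) = 0$. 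This completes the proof. The one genuinely substantive point — and the step I expect to need the most care — is the codimension/strength estimate ensuring that passing to $V'$ does not destroy infinite (collective) strength; everything else is bookkeeping and an appeal to Theorem~\ref{thm:univ}.
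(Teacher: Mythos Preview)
Your argument is correct, but it takes a different route from the paper's. You restrict to the finite-codimension subspace $V'=\bigcap_m\ker(\lambda_m)$ and then invoke a codimension/strength estimate to ensure that $f|_{V'}$ and the $q_i|_{V'}$ retain infinite (collective) strength before applying Theorem~\ref{thm:univ} on $V'$. The paper instead stays on $V$ and simply enlarges the collection: it writes each $q'_i$ as a sum of squares of linear forms, takes a basis $\ell'_1,\dots,\ell'_a$ of the span of all the linear forms involved (including the $\ell_i$), and observes that $f,q_1,\dots,q_r,\ell'_1,\dots,\ell'_a$ already have infinite collective strength, since a nontrivial homogeneous combination lives in a single degree and a nonzero linear form trivially has infinite strength. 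Theorem~\ref{thm:univ} then gives a $v$ with the prescribed values on $f$ and the $q_i$ and with $\ell'_j(v)=0$, which forces $q'_i(v)=0$ and $\ell_i(v)=0$. The paper's approach is a bit slicker in that it avoids your auxiliary lemma altogether; on the other hand, your codimension/strength estimate $\str(h|_U)\ge\str(h)-\operatorname{codim}(U)$ is a useful standalone fact and makes transparent why the finite-strength data can be discarded. One minor quibble: your handling of the finite-dimensional case is slightly off---there is no nonzero cubic of infinite strength on a finite-dimensional space, so the hypotheses are simply vacuous there rather than forcing $f=0$.
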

\begin{proof}
Since $q'_i$ has finite strength, we can write $q'_i=\sum_{j=1}^{n_i} \ell_{i,j}^2$ for some linear forms $\ell_{i,j}$. Let $L \subset P_1(V)$ be the span of the $\ell_i$'s and $\ell_{i,j}$'s, and let $\ell'_1, \ldots, \ell'_a$ be a basis for $L$. The forms
\begin{displaymath}
f, q_1, \ldots, q_r, \ell'_1, \ldots, \ell'_q
\end{displaymath}
have infinite collective strength. Thus by Theorem~\ref{thm:univ}, there is a vector $v \in V$ such that $f(v)=a$, $q_i(v)=b_i$, and $\ell'_i(v)=0$ for all $i$. This implies that $q'_i(v)=0$ and $\ell_i(v)=0$ for all $i$, and so the result follows.
\end{proof}

\begin{remark} \label{rmk:univ}
Let $\ul{d}=(d_1, \ldots, d_r)$ be a tuple of positive integers. A \defn{$\ul{d}$-space} is a vector space $V$ equipped with forms $f_i \in P_{d_i}(V)$ for each $1 \le i \le r$. In this context, ``universality of strength'' means that if $V$ is a $\ul{d}$-space whose defining forms have infinite collective strength then any finite dimensional $\ul{d}$-space $W$ embeds into $V$. For $r=1$, this can be easily deduced from \cite[Theorem~1.9]{KaZ}. For arbitrary $r$ (and, in fact, arbitrary collections of Schur functors), but for $k$ of characteristic~0, it was proved in \cite[Corollary~2.6.3]{BDDE}. Theorem~\ref{thm:univ} establishes it for arbitrary $r$ and $k$, but only when $W$ is 1-dimensional. Our method, relying on commutative algebra, differs from the approaches of \cite{KaZ} and \cite{BDDE}.
\end{remark}

\subsection{Isogeny classes of quadratic spaces}

The following is the analog of Theorem~\ref{mainthm} for quadratic spaces. We will use it in the proof of Theorem~\ref{mainthm}. This result is likely well-known (and follows from \cite[Example~5.1.2]{BDDE} in characteristic~0), but we include a proof to be complete.

\begin{proposition} \label{prop:qmax}
Suppose $\operatorname{char}(k) \ne 2$. Let $V$ be a non-degenerate quadratic space and let $W$ be a quadratic space of dimension $\le \aleph_0$. Then there is an embedding $W \to V$.
\end{proposition}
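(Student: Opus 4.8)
The plan is to build the embedding $W \to V$ one basis vector at a time, using a transfinite (here: countable) enumeration of a basis of $W$ together with Corollary~\ref{cor:univ} to keep enough ``room'' in $V$ at each stage. Let $\{w_j\}_{j \ge 1}$ be a basis of $W$ with dual coordinates $y_j$, and write the quadratic form on $W$ as $g = \sum_{i \le j} c_{ij} y_i y_j$. Recall from \S\ref{ss:univ} (the discussion of embeddings) that giving an embedding $\phi \colon W \to V$ is the same as choosing, for each $j$, a linear form $\ell_j \in P_1(V)$ — these will be $\ell_j = \phi^*(\cdot)$ evaluated appropriately, i.e.\ $\ell_j$ is ``$y_j$ pulled back'', actually the cleaner formulation: choose vectors $u_j = \phi(w_j) \in V$ — such that $q_V(u_i, u_j)$ matches the bilinear form of $g$ on $w_i, w_j$ for all $i,j$, where $q_V$ is the symmetric bilinear form associated to the form $f$ on $V$ (using $\operatorname{char}(k)\ne 2$). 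So I want to construct vectors $u_1, u_2, \ldots \in V$ with $q_V(u_i,u_j) = b_{ij} := c_{ij}$ for $i<j$, $q_V(u_i,u_i) = f(u_i) = c_{ii}$, and — crucially for the induction to continue — such that $f$ restricted to the orthogonal complement of $\operatorname{span}(u_1,\ldots,u_n)$ in $V$ is still of infinite strength.

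The inductive step is where Corollary~\ref{cor:univ} enters, in its quadratic analogue. Having chosen $u_1, \ldots, u_{n-1}$, I need $u_n$ with $f(u_n) = c_{nn}$ and $q_V(u_n, u_i) = c_{in}$ for $i < n$. The conditions $q_V(\cdot, u_i)$ are linear forms on $V$, and $f(\cdot) = c_{nn}$ is one quadratic condition where the quadratic form $f$ has infinite strength; so this is exactly the shape handled by (the quadratic version of) Corollary~\ref{cor:univ} — one infinite-strength quadratic form plus finitely many linear constraints — and such a $u_n$ exists. The one subtlety is maintaining non-degeneracy of the perpendicular space: I should argue that $f$ restricted to $u_1^\perp \cap \cdots \cap u_n^\perp$ still has infinite strength. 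This follows because passing to the common kernel of finitely many linear forms drops strength by at most that number of forms (writing $f$ on $V$ in terms of coordinates adapted to the $u_i$, the restriction differs from $f$ by a bounded-strength correction), so infinite strength is preserved. Alternatively one can phrase the whole induction inside $u_1^\perp \cap \cdots \cap u_{n-1}^\perp$ from the start and only ever invoke Corollary~\ref{cor:univ} there.

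I expect the main obstacle to be the bookkeeping around the bilinear form versus the quadratic form, and specifically verifying that after choosing $u_1, \ldots, u_{n-1}$ the linear forms $q_V(\cdot, u_i)$ are genuinely available as constraints in Corollary~\ref{cor:univ} — i.e.\ that they are honest elements of $P_1(V)$ (they are, since each $u_i$ is a finite combination of basis vectors, so $q_V(\cdot, u_i)$ involves only finitely many coordinates) — and that the ``column-finiteness'' needed for $\phi$ to be a well-defined linear map $W \to V$ holds. For the latter: each $u_j$ lies in the finite-dimensional span of basis vectors used up to stage $j$, so the matrix of $\phi$ is column-finite automatically, and that is all that is required of a linear map $W \to V$. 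A second, minor obstacle is handling the degenerate part of $g$ (if $g$ itself has finite strength or $W$ has a large radical): but the construction above makes no non-degeneracy assumption on $W$ — any values $c_{ij}$ are allowed — so this is not actually an issue, which matches the statement of the proposition. If $W$ is finite dimensional the same induction terminates after finitely many steps.
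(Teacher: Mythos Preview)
Your approach is different from the paper's. The paper first constructs an orthonormal sequence $v_1,v_2,\ldots$ in $V$---this is the special case of your induction where the targets are $q(v_n)=1$ and $\langle v_i,v_n\rangle=0$, and here the linear forms $\lambda_i=\langle v_i,\cdot\rangle$ are automatically independent since $\lambda_i(v_j)=\delta_{ij}$---and then embeds $W$ into $V_\infty=\bigcup_n\mathrm{span}(v_1,\ldots,v_n)$ by writing $W=\bigcup W_n$ with $W_n$ finite-dimensional and invoking Witt's theorem to make successive finite-dimensional embeddings $W_n\to V_\infty$ compatible.

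Your direct induction has a genuine gap. To solve $f(u_n)=c_{nn}$ together with $q_V(u_n,u_i)=c_{in}$ for $i<n$ via Theorem~\ref{thm:univ}, you need the linear forms $\ell_i=q_V(\cdot,u_i)$ to be linearly independent (Corollary~\ref{cor:univ} as stated only sets linear forms to zero, so for nonzero off-diagonal targets you are really using Theorem~\ref{thm:univ}). But nothing in your construction prevents some $u_j$ from landing in the radical of $(V,f)$, which can be nontrivial even when $f$ has infinite strength; if that happens then $\ell_j=0$ and the next step is impossible whenever the corresponding target $c_{jn}$ is nonzero. Your alternative of working inside $u_1^\perp\cap\cdots\cap u_{n-1}^\perp$ forces all off-diagonal targets to vanish, so it only treats the case where $g$ is already diagonal in the chosen basis of $W$. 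The repair---either maintain linear independence of the $\ell_i$ as part of the inductive hypothesis (choosing each $u_n$ outside $\mathrm{span}(u_1,\ldots,u_{n-1})$ plus the radical), or first diagonalize $g$ on $W$---is doable, but it is precisely the bookkeeping the paper's two-step route (orthonormal in $V$, then Witt for $W$) is designed to absorb. One small side correction: $q_V(\cdot,u_i)$ does lie in $P_1(V)$, but it need not involve only finitely many coordinates.
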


\begin{proof}
We claim that there exist orthonormal vectors $\{v_i\}_{i \ge 1}$ in $V$. This can be deduced from \cite[\S 2.2]{Gross}, but can also be proved directly as follows. Let $q \in P_2(V)$ be the given quadratic form on $V$, which has infinite strength by assumption, and write $\langle-,- \rangle$ for the associated symmetric bilinear form. Suppose we have found orthonormal vectors $v_1, \ldots, v_{n-1}$. Let $\lambda_i=\langle v_i, - \rangle$, regarded as an element of $P_1(V)$. The $\lambda_i$ are linearly independent, and thus have infinite collective strength. We can therefore find a $v_n$ satisfying $q(v_n)=1$ and $\lambda_i(v_n)=0$ for $1 \le i \le n-1$ by Theorem~\ref{thm:univ}. The claim therefore follows by induction.

Now, let $V_n \subset V$ be the span of $v_1, \ldots, v_n$, and let $V_{\infty}=\bigcup_{n \ge 1} V_n$. Write $W=\bigcup_{n \ge 1} W_n$ with $W_n$ finite dimensional. We claim that there exist embeddings $\phi_n \colon W_n \to V_{\infty}$ that are compatible, in the sense that $\phi_{n+1} \vert_{W_n} = \phi_n$. Of course, this will show that $W$ embeds into $V_{\infty}$, and thus into $V$. Suppose we have constructed $\phi_1, \ldots, \phi_n$. It is not difficult to see that there is some embedding $\phi_{n+1} \colon W_{n+1} \to V_{\infty}$. Let $m$ be such that $\phi_n$ and $\phi_{n+1}$ take values in $V_m$. By Witt's theorem, the embeddings $\phi_n$ and $\phi_{n+1} \vert_{W_n}$ of $W_n$ into $V_m$ differ by an automorphism of $V_m$. Thus modifying $\phi_{n+1}$ by this automorphism, we have $\phi_{n+1} \vert_{W_n} = \phi_n$, as required.
\end{proof}

\begin{corollary}
The set $\sX_2$ is a singleton.
\end{corollary}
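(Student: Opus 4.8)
The plan is to derive the corollary as an immediate consequence of Proposition~\ref{prop:qmax}, which does essentially all the work; two small things remain, namely that $\sX_2$ is non-empty and that it has at most one element.

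For the upper bound $|\sX_2| \le 1$, I would take two arbitrary non-degenerate quadratic spaces $V$ and $W$ of countable dimension. Proposition~\ref{prop:qmax}, applied with this $V$ in the role of the non-degenerate space and this $W$ in the role of the countable-dimensional space, produces an embedding $W \to V$; applying it again with the roles of $V$ and $W$ interchanged (which is legitimate, as $W$ is also non-degenerate and $V$ also has countable dimension) produces an embedding $V \to W$. Hence $V$ and $W$ are isogenous, so $[V] = [W]$ in $\sX_2$.

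For non-emptiness I would exhibit one non-degenerate quadratic space of countable dimension: take $V$ with basis $\{v_i\}_{i \ge 1}$, coordinates $\{x_i\}_{i \ge 1}$, and $q = \sum_{i \ge 1} x_i^2$ (this makes sense since $\operatorname{char}(k) \ne 2$). To check $\str(q) = \infty$, note that strength does not increase under restriction to a subspace, since a restriction is a specialization, so $\str(q) \ge \str(q|_{V_m})$ where $V_m = \langle v_1, \ldots, v_m \rangle$. If $q|_{V_m} = x_1^2 + \cdots + x_m^2$ could be written as $\sum_{j=1}^s \ell_j \ell'_j$ with $\ell_j, \ell'_j$ linear, then, comparing the associated symmetric matrices (each $\ell_j \ell'_j$ contributes one of rank at most $2$), we would get $m \le 2s$; thus $\str(q|_{V_m}) \ge \lceil m/2 \rceil$, and letting $m \to \infty$ gives $\str(q) = \infty$. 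So $(V, q)$ is non-degenerate and $[V] \in \sX_2$, completing the proof.

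I do not anticipate any real obstacle: the substantive content is packaged in Proposition~\ref{prop:qmax}, and the only computation is the elementary lower bound on the strength of a full-rank finite-dimensional quadratic form, which could in any case be replaced by invoking the known existence of non-degenerate countable-dimensional quadratic spaces (see \cite[\S 2.2]{Gross}).
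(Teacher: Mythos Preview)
Your proposal is correct and matches the paper's approach: the paper states the corollary with no proof, treating it as immediate from Proposition~\ref{prop:qmax}, and your argument supplies exactly the obvious details (apply the proposition in both directions for uniqueness, and exhibit $\sum_{i\ge 1} x_i^2$ for existence). The strength bound via the rank of the associated symmetric matrix is a clean way to verify non-degeneracy.
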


\section{Residual rank} \label{s:rrk}

From now on, $k$ is algebraically closed and $\operatorname{char}(k) \ne 2,3$. Let $V$ be a vector space. An element $v \in V$ induces a derivation $\partial_v$ of $P(V)$, given by partial derivative. Explicitly, if $V$ has basis $\{v_i\}_{i \in I}$ and $x_i$ is dual to $v_i$ then $\partial_{v_i}(f)$ is just the partial derivative $\frac{\partial f}{\partial x_i}$. Let $f \in P_3(V)$. For $v \in V$, write $f_v$ for the derivative $\partial_v(f)$, which is an element of $P_2(V)$. For $q \in P_2(V)$, let $\ol{q}$ be its image in $\ol{P}_2(V)$; in particular, $\ol{f}_{\! v} \in \ol{P}_2(V)$. We have a linear map
\begin{displaymath}
\ol{q}_f \colon V \to \ol{P}_2(V), \qquad \ol{q}_f(v) = \ol{f}_{\! v}.
\end{displaymath}
We let $\ol{Q}_f$ be the image of this map. The following proposition gives a basis-free characterization of residual rank:

\begin{proposition}
The residual rank of $f \in P_3(V)$ is the dimension of $\ol{Q}_f$.
\end{proposition}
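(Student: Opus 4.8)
The plan is to observe that the two descriptions of residual rank agree literally, not merely in dimension, so the proposition becomes an exercise in unwinding definitions. First I would record that the assignment $v \mapsto \partial_v$ is $k$-linear. Indeed, $\partial_v$ is defined by its action on monomials, extended by (possibly infinite) $k$-linearity to $P(V)$; and since a vector $v$ is by definition a \emph{finite} linear combination $\sum_i c_i v_i$ of basis vectors, we have $\partial_v = \sum_i c_i \partial_{v_i}$ as derivations of $P(V)$, and in particular $\partial_v(f) = \sum_i c_i \partial_{v_i}(f)$ in $P_2(V)$. Postcomposing with the linear quotient map $P_2(V) \to \ol{P}_2(V)$, $q \mapsto \ol{q}$, shows that $\ol{q}_f \colon V \to \ol{P}_2(V)$, $v \mapsto \ol{f}_{\!v}$, is linear, so that $\ol{Q}_f$ is genuinely a linear subspace of $\ol{P}_2(V)$.

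Next, fix the basis $\{v_i\}_{i \in I}$ of $V$ used to define $\rrk(f)$, with dual coordinates $x_i$. Directly from the definition of $\partial_{v_i}$ we get $\ol{q}_f(v_i) = \ol{\partial_{v_i}(f)} = \ol{\partial f / \partial x_i}$ for each $i$. Because $\ol{q}_f$ is linear and the $v_i$ span $V$, every element $\ol{q}_f(v)$ is a finite $k$-linear combination of the $\ol{q}_f(v_i)$; hence the image $\ol{Q}_f$ of $\ol{q}_f$ coincides with the span of $\{\ol{q}_f(v_i) : i \in I\} = \{\ol{\partial f / \partial x_i} : i \in I\}$. By definition this last span is exactly the subspace of $\ol{P}_2(V)$ whose dimension is $\rrk(f)$, so $\dim \ol{Q}_f = \rrk(f)$, as claimed.

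I do not expect any real obstacle: the statement is essentially a translation between a coordinate description and a coordinate-free one. The only points worth a sentence of care are (i) the linearity of $v \mapsto \partial_v$, which uses that vectors of $V$ are finite combinations of basis vectors even though elements of $P_2(V)$ generally are not, and (ii) noting afterward that the right-hand side $\dim \ol{Q}_f$ is manifestly independent of the chosen basis, so that this proposition also settles the earlier remark that $\rrk(f)$ is basis-independent.
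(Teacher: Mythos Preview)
Your proof is correct and follows essentially the same approach as the paper: use linearity of $\ol{q}_f$ to conclude that $\ol{Q}_f$ is the span of the $\ol{q}_f(v_i)=\ol{\partial f/\partial x_i}$, which by definition has dimension $\rrk(f)$. You simply spell out the linearity of $v\mapsto\partial_v$ more carefully than the paper does (the paper asserts linearity just before stating the proposition), and your closing remark on basis-independence is a nice touch.
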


\begin{proof}
Let $\{v_i\}_{i \in I}$ be a basis of $V$ and let $x_i$ be dual to $v_i$. Since $\ol{q}_f$ is linear, the space $\ol{Q}_f$ is the span of the vectors $\ol{q}_f(v_i) = \frac{\partial f}{\partial x_i}$. The dimension of this space is $\rrk(f)$, by definition.
\end{proof}

The following proposition allows one to easily read of $\ol{Q}_f$ in many cases.

\begin{proposition} \label{prop:rrk-formula}
Let $V$ be a vector space with basis $\{v_i\}_{i \in I}$ and let $x_i \in P_1(V)$ be the dual vector to $v_i$. Let $f \in P_3(V)$, and write $f=\sum_{i \in I} x_i q_i$, where $q_j \in P_2(V)$ and each $x_i$ appears in only finitely many $q_j$'s. Then $\ol{f}_{\! v_i} = \ol{q}_i$, and $\ol{Q}_f$ is the span of the $\ol{q}_i$'s.
\end{proposition}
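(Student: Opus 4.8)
The plan is to compute the formal derivative $\frac{\partial f}{\partial x_i}$ directly from the expression $f=\sum_{j\in I}x_jq_j$, recognize that it differs from $q_i$ only by a finite-strength quadratic, and then quote the basis-free description of residual rank from the preceding proposition.

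First I would apply the derivation $\partial_{v_i}$ to $f=\sum_{j\in I}x_jq_j$ term by term. Since $\partial_{v_i}$ is a derivation of $P(V)$ with $\partial_{v_i}(x_j)=\delta_{ij}$, the Leibniz rule gives $\partial_{v_i}(x_jq_j)=\delta_{ij}q_j+x_j\frac{\partial q_j}{\partial x_i}$, whence
\[
\frac{\partial f}{\partial x_i}=q_i+\sum_{j\in I}x_j\frac{\partial q_j}{\partial x_i}.
\]
By hypothesis $x_i$ occurs in only finitely many of the $q_j$, i.e.\ $\frac{\partial q_j}{\partial x_i}\neq 0$ for only finitely many $j\in I$; hence the sum on the right is a \emph{finite} sum of products of pairs of linear forms, so it has strength bounded by that finite number and therefore lies in $P^{\circ}_2(V)$. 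Passing to $\ol{P}_2(V)$ we obtain $\ol{f}_{\!v_i}=\ol{q}_i$.

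The one point that needs justification is the term-by-term differentiation, since $f=\sum_{j}x_jq_j$ is in general an infinite formal sum. I would handle this by observing that the family $(x_jq_j)_{j\in I}$ is locally finite for the monomial topology on $P_3(V)$ — a given degree-$3$ monomial $x_ax_bx_c$ occurs in $x_jq_j$ only for $j\in\{a,b,c\}$ — and that $\partial_{v_i}$ is continuous for this topology, so it commutes with the summation; equivalently, a direct bookkeeping of monomial coefficients shows $\sum_{j}\partial_{v_i}(x_jq_j)$ is a well-defined element of $P_2(V)$ that agrees with $\frac{\partial f}{\partial x_i}$. Once $\ol{f}_{\!v_i}=\ol{q}_i$ is established, the preceding proposition gives $\ol{Q}_f=\operatorname{span}\{\ol{q}_f(v_i):i\in I\}=\operatorname{span}\{\ol{f}_{\!v_i}:i\in I\}=\operatorname{span}\{\ol{q}_i:i\in I\}$, which is exactly the assertion. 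I expect the summability/continuity bookkeeping to be the only mildly delicate step; everything else is a one-line computation.
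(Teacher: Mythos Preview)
Your proof is correct and follows essentially the same route as the paper: compute $\partial_{v_i}f = q_i + \sum_j x_j\,\partial q_j/\partial x_i$, observe that the correction term is a finite sum of products of linear forms and hence lies in $P_2^{\circ}(V)$, and conclude. The paper does not spell out the term-by-term differentiation or the invocation of the preceding proposition for the span statement, but your added care there is harmless.
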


\begin{proof}
For $i \in I$ we have
\begin{displaymath}
f_{v_i} = q_i + \sum_{j \in I} x_j \frac{\partial q_j}{\partial x_i}
\end{displaymath}
By hypothesis, for fixed $i$ the derivative $\frac{\partial q_j}{\partial x_i}$ is non-zero for only finitely many $j$. Thus the sum on the right is of finite strength, and so maps to~0 in $\ol{P}_2(V)$. This completes the proof.
\end{proof}

Recall that $\str(f)$ denotes the strength of $f \in P(V)$. The following proposition establishes a relationship between strength and residual rank.

\begin{proposition} \label{prop:rrk-str}
Let $f,g \in P_3(V)$.
\begin{enumerate}
\item We have $\rrk(f+g)\leq \rrk(f)+\rrk(g)$.
\item We have $\rrk(f) \le \str(f)$.
\end{enumerate}
\end{proposition}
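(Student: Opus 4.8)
\emph{Part (a).} The plan is to work directly from the basis-free characterization: $\rrk(f)=\dim \ol{Q}_f$, where $\ol{Q}_f$ is the image of the linear map $\ol{q}_f\colon V\to\ol{P}_2(V)$ sending $v\mapsto \ol{f}_{\!v}$. The key observation is that differentiation is additive, so $\partial_v(f+g)=\partial_v(f)+\partial_v(g)$, and hence $\ol{q}_{f+g}=\ol{q}_f+\ol{q}_g$ as maps $V\to\ol{P}_2(V)$. Therefore $\ol{Q}_{f+g}=\im(\ol{q}_f+\ol{q}_g)\subseteq \ol{Q}_f+\ol{Q}_g$, and taking dimensions gives $\rrk(f+g)=\dim\ol{Q}_{f+g}\leq\dim(\ol{Q}_f+\ol{Q}_g)\leq\dim\ol{Q}_f+\dim\ol{Q}_g=\rrk(f)+\rrk(g)$. (If either term on the right is infinite the inequality is trivial, so this requires no special care.)

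\emph{Part (b).} Here I would induct on $s=\str(f)$; the claim is $\rrk(f)\leq s$. If $s=\infty$ there is nothing to prove, so assume $s$ is finite. Write $f=\sum_{i=1}^s g_ih_i$ with $g_i,h_i$ homogeneous of degree smaller than $3$. The factors of each product must have degrees $(1,2)$ or $(2,1)$, so after relabeling, $f=\sum_{i=1}^s \ell_i q_i$ with $\ell_i\in P_1(V)$ and $q_i\in P_2(V)$. By Part (a) it suffices to show $\rrk(\ell q)\leq 1$ for a single product of a linear and a quadratic form, since then $\rrk(f)\leq\sum_{i=1}^s\rrk(\ell_iq_i)\leq s$. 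For this, extend $\ell$ to a basis-adapted coordinate system: choose a basis $\{v_i\}_{i\in I}$ of $V$ with dual coordinates $\{x_i\}$ such that $\ell=x_{i_0}$ for one index $i_0$. Then $\partial_{v_{i_0}}(\ell q)=q+x_{i_0}\partial_{v_{i_0}}(q)$, and $\partial_{v_j}(\ell q)=x_{i_0}\partial_{v_j}(q)$ for $j\neq i_0$; in all cases the result differs from a scalar multiple of the fixed quadratic form $q$ by a form of the shape $x_{i_0}\cdot(\text{linear})$, which has strength $\leq 1<\infty$ and hence maps to $0$ in $\ol{P}_2(V)$. Thus every $\ol{f}_{\!v_i}$ lies in the span of $\ol{q}$, so $\ol{Q}_{\ell q}$ is at most one-dimensional and $\rrk(\ell q)\leq 1$.

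\emph{Remarks on obstacles.} Neither part is technically deep; the only point requiring a little care is making sure the expression $f=\sum \ell_i q_i$ is literally available from the definition of strength. The definition says $\str(f)\leq s$ means there \emph{is} an expression $f=\sum_{i=1}^s g_ih_i$ with $g_i,h_i$ homogeneous of degree strictly less than $3$; when $\str(f)=s$ is finite the infimum is attained (the set of valid $s$ is a set of integers containing $s$), so such an expression with exactly $s$ summands exists, and each summand is a product of a degree-$1$ and a degree-$2$ form as claimed. After that, the computation $\rrk(\ell q)\leq 1$ is essentially the same observation already used in the proof of Proposition~\ref{prop:rrk-formula}: a product $x_{i_0}\cdot(\text{linear in all variables})$ has finite strength and dies in $\ol{P}_2(V)$. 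One could alternatively phrase Part (b) more slickly by noting $f=\sum\ell_iq_i$ is itself a presentation $f=\sum_{i}x'_i q'_i$ (after enlarging coordinates so the $\ell_i$ become coordinates) and invoking Proposition~\ref{prop:rrk-formula} directly to get that $\ol{Q}_f$ is spanned by $s$ elements, but the inductive phrasing via Part (a) avoids any bookkeeping about whether the $\ell_i$ are independent.
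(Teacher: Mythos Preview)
Your proof is correct and follows essentially the same approach as the paper: both parts reduce (b) via (a) to the single-summand case $f=\ell q$ and then observe that each partial derivative is a scalar multiple of $q$ modulo a finite-strength term. The only cosmetic difference is that the paper applies the product rule directly in an arbitrary basis, writing $\partial_{v_i}(\ell q)=\ell(v_i)\,q+\ell\,\partial_{v_i}(q)$, which avoids the (harmless) step of choosing coordinates adapted to $\ell$.
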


\begin{proof}
(a)
We have $\frac{\partial(f+g)}{\partial x_i}=\frac{\partial f}{\partial x_i}+\frac{\partial g}{\partial x_i}$ for all $i\in I$ and so $\overline{Q}_{f+g}\subset\overline{Q}_f+\overline{Q}_g$.

(b)
If $f$ has infinite strength there is nothing to prove. Suppose $\str(f)$ is finite. By (a) it suffices to consider the case where $f=\ell q$ with $\ell\in P_1(V)$ and $q\in P_2(V)$. We have
\[
\frac{\partial f}{\partial x_i}=\ell(v_i)q+\ell\frac{\partial q}{\partial x_i}
\]
and so $\ol{Q}_f\subset\mathrm{span}\{\overline{q}\}$
\end{proof}

\begin{proposition} \label{prop:rrk-embed}
Let $V$ be a cubic space and let $W$ be a subspace. Then $\rrk(W) \le \rrk(V)$.
\end{proposition}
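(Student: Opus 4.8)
The plan is to compare the two residual ranks through the restriction map on degree-two forms induced by the inclusion $\iota\colon W\to V$. Write $f$ for the cubic form on $V$ and $g=f|_W=\iota^*(f)$ for the induced form on $W$. Choose a basis $\{v_i\}_{i\in I}$ of $V$ together with a subset $I_0\subseteq I$ so that $\{v_i\}_{i\in I_0}$ is a basis of $W$, with dual coordinates $x_i$ on $V$ and $y_i=x_i|_W$ (for $i\in I_0$) on $W$. In these coordinates, restricting a polynomial function along $\iota$ sends each monomial in the $x_i$ to the corresponding monomial in the $y_i$ if all its variables lie in $I_0$, and to $0$ otherwise; this defines $k$-linear maps $\iota^*\colon P_n(V)\to P_n(W)$ for all $n$.

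I would then record two elementary facts. First, restricting a product of positive-degree forms again gives such a product, so $\iota^*$ maps forms of finite strength to forms of finite strength; in particular $\iota^*(P_2^{\circ}(V))\subseteq P_2^{\circ}(W)$, and hence $\iota^*$ descends to a linear map $\ol{\iota^*}\colon\ol{P}_2(V)\to\ol{P}_2(W)$. Second, for $i\in I_0$ the formal partial derivative commutes with restriction: checking monomial by monomial one gets $\partial_{v_i}(g)=(\partial_{v_i}f)|_W$, i.e. $g_{v_i}=\iota^*(f_{v_i})$ in $P_2(W)$.

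Combining the two facts, for each $i\in I_0$ we obtain $\ol{g}_{v_i}=\ol{\iota^*}(\ol{f}_{v_i})$ in $\ol{P}_2(W)$. Now $\ol{Q}_g$ is spanned by the $\ol{g}_{v_i}$ with $i\in I_0$, and each such $\ol{g}_{v_i}$ lies in $\ol{\iota^*}(\ol{Q}_f)$ since $\ol{f}_{v_i}\in\ol{Q}_f$; therefore $\ol{Q}_g\subseteq\ol{\iota^*}(\ol{Q}_f)$. Passing to dimensions and using that a linear image cannot increase dimension, $\rrk(W)=\dim\ol{Q}_g\le\dim\ol{\iota^*}(\ol{Q}_f)\le\dim\ol{Q}_f=\rrk(V)$, as desired.

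I do not expect a genuine obstacle here: the content is entirely in the two bookkeeping claims of the second paragraph — that restriction to a coordinate subspace preserves finite strength and commutes with the derivations $\partial_{v_i}$ for $i\in I_0$ — and both are immediate once everything is written in the adapted coordinates. The one point to keep an eye on is that the argument only handles the derivations $\partial_{v_i}$ with $v_i\in W$, but those are precisely the derivatives that compute $\ol{Q}_g$, so nothing is lost.
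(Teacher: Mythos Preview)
Your argument is correct and follows the same line as the paper's own proof. The paper phrases the key step basis-free---writing $\pi\colon\ol{P}_2(V)\to\ol{P}_2(W)$ for the induced map and asserting $\pi(\ol{q}_f(w))=\ol{q}_g(w)$ for $w\in W$---whereas you verify this identity in coordinates adapted to $W$; the content and structure are otherwise identical.
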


\begin{proof}
Let $f \in P_3(V)$ be the given form on $V$, and let $g$ be its restriction to $W$. The inclusion $W \to V$ induces a map $P_2(V) \to P_2(W)$, which clearly preserves finite strength forms, and thus induces a map $\pi \colon \ol{P}_2(V) \to \ol{P}_2(W)$. One easily verifies that for $w \in W$ we have $\pi(\ol{q}_f(w))=\ol{q}_g(w)$. Thus $\ol{Q}_g \subset \pi(\ol{Q}_f)$, and so the result follows.
\end{proof}

Finally, we introduce one additional piece of notation. We let $\langle -, -, - \rangle_f$ be the symmetric trilinear form on $V$ defined by $f \in P_3(V)$, and we let $\langle -, - \rangle_q$ be the symmetric bilinear form defined by $q \in P_2(V)$. For $v \in V$, we have
\begin{displaymath}
\langle -, - \rangle_{f_v} = 3 \langle v, -, - \rangle_f.
\end{displaymath}
This relation characterizes $f_v$ uniquely.

\section{Proof of Theorem~\ref{mainthm}: infinite residual rank} \label{s:rrk-inf}

We now prove Theorem~\ref{mainthm} when $V$ has infinite residual rank. 
In this case the statement reads:

\begin{proposition} \label{prop:main-inf}
Let $V$ be a cubic space of infinite residual rank. If $W$ is any cubic space of countable (or finite) dimension then $W$ embeds into $V$.
\end{proposition}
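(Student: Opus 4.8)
The plan is to reduce to embedding one fixed, fully explicit space into $V$, and then to construct that embedding one vector at a time via Corollary~\ref{cor:univ}. Two preliminary remarks. Since $\rrk(f)=\infty$, Proposition~\ref{prop:rrk-str}(b) gives $\str(f)=\infty$, so $V$ is non-degenerate. And if $V'\subseteq V$ has finite codimension $c$ (say $V=V'\oplus F$ with $\dim F=c$), then $f-f|_{V'}$ is a sum of monomials each using a coordinate of $F$, hence has strength $\le c$, so by Proposition~\ref{prop:rrk-str} again $\rrk(f|_{V'})\ge\rrk(f)-c=\infty$. Thus Corollary~\ref{cor:univ}, and the infinitude of the residual rank, remain available on every finite-codimensional subspace of $V$.

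\textbf{Step 1: reduction.} Let $\bU$ be the space of Example~\ref{ex:rrk-inf}, with coordinates $\{a_i\}\cup\{b_{i,j}\}$ and form $h=\sum_i a_i\sum_j b_{i,j}^2$. Every countable (or finite) cubic space $W$ embeds into $\bU$: choose a basis of $W$ with coordinates $\{y_j\}$, sort the monomials of the form of $W$ by their smallest variable to write that form as $\sum_j y_j r_j$ with $r_j\in P_2(W)$ and each $y_l$ occurring in only finitely many $r_j$; then, processing a basis of $W$ in order and completing squares, write each $r_j=\sum_m\ell_{j,m}^2$ with each $y_l$ occurring in only finitely many $\ell_{j,m}$; now $a_j\mapsto y_j$ and $b_{j,m}\mapsto\ell_{j,m}$ defines an embedding $W\to\bU$. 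So it suffices to embed $\bU$ into $V$.

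\textbf{Step 2: embedding $\bU$.} An embedding $\psi\colon\bU\to V$ is exactly a choice of vectors $P_i=\psi(a_i)$ and $Y_{i,j}=\psi(b_{i,j})$ in $V$ for which $\langle-,-,-\rangle_f$ matches the trilinear form of $h$: all triple brackets among the $P$'s vanish, all triple brackets among the $Y$'s vanish, $\langle P_i,P_{i'},Y_{l,m}\rangle_f=0$, and $\langle P_l,Y_{i,j},Y_{i',j'}\rangle_f$ equals a fixed nonzero constant when $l=i=i'$, $j=j'$ and $0$ otherwise. Note that the only nonzero right-hand sides occur in conditions quadratic in a single vector $Y_{i,j}$; this is the payoff of routing through $\bU$ (a direct induction on $W$ would instead face nonzero right-hand sides among the \emph{linear} conditions, and one would then have to rule out spurious linear relations, as already for $W$ with form $w_1w_2w_3$). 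I would enumerate the coordinates of $\bU$ through the finite stages $\{a_1,\dots,a_n\}\cup\{b_{i,j}:i,j\le n\}$ and build the vectors $P_i,Y_{i,j}$ one at a time. When a new vector $u^*$ is introduced, the conditions above that involve only $u^*$ and already-built vectors become finitely many linear equations (right-hand side $0$), finitely many quadratic equations $\tfrac13 f_v(u^*)=(\text{const})$ with $v$ among the already-built vectors, and the cubic equation $f(u^*)=0$; Corollary~\ref{cor:univ} produces such a $u^*$ provided these quadratic equations split correctly into an infinite-collective-strength block (arbitrary values allowed) and a finite-strength block (values forced to $0$). I would therefore carry along the invariant that each $u^*$ is chosen with $\ol{f}_{u^*}$ outside $\mathrm{span}\{\ol{f}_v:v\text{ built before }u^*\}$ in $\ol{P}_2(V)$. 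Granting it, the finite-strength combinations of the $f_v$ are precisely those whose prescribed value is $0$, while the distinguished quadratic equation with nonzero right-hand side lies in the infinite-strength block, so Corollary~\ref{cor:univ} applies. The invariant is maintainable: the finitely many linear equations cut $V$ to a finite-codimensional subspace $V'$ with $\rrk(f|_{V'})=\infty$; the solution set $S\subseteq V'$ of the remaining quadratic and cubic equations spans $V'$ (by Theorem~\ref{thm:univ}: adjoining any nonzero linear form of $V'$ to that system preserves infinite collective strength, so $S$ meets every affine hyperplane of $V'$); and the restriction map $\ol{P}_2(V)\to\ol{P}_2(V')$ carries $\mathrm{span}\{\ol{f}_v:v\in V'\}$ onto the residual-rank space of $f|_{V'}$, which is infinite-dimensional, so the linear map $v\mapsto\ol{f}_v$ cannot send the spanning set $S$ into any fixed finite-dimensional subspace.

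\textbf{The main obstacle} is the bookkeeping in Step 2: arranging the enumeration so that at each finite stage only finitely many of the infinitely many defining equations of $\psi$ are active, and verifying that the residual-rank invariant can be maintained while simultaneously supplying, at every stage, both the infinite-collective-strength hypothesis of Corollary~\ref{cor:univ} and the consistency of the prescribed constants (so that a quadratic equation with nonzero right-hand side is never forced to $0$). This invariant is exactly where the hypothesis $\rrk(V)=\infty$ is used: it guarantees an inexhaustible supply of new residual directions in which to place the vectors $P_i$ and $Y_{i,j}$.
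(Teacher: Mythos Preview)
Your proof is correct and follows the same two-step architecture as the paper: first reduce to the explicit universal space (your $\bU$ is the paper's $V(\infty)$ up to a harmless scalar), then embed that space into $V$ by an inductive construction governed by a residual-rank invariant. Step~1 is essentially identical to the paper's Lemma~\ref{lem:main-inf-1}; your ``completing squares'' is the content of Proposition~\ref{prop:qmax}, which the paper invokes by name.

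The only substantive difference is in the bookkeeping of Step~2. The paper organizes the inductive data into ``good tables'' and maintains the weaker invariant that the $\ol{q}(v_i)$ are independent while the $\ol{q}(w_{i,j})$ are merely independent \emph{or zero}; it then handles row-extension and row-addition by two separate mechanisms (Corollary~\ref{cor:univ} with auxiliary linear constraints $\mu_i$ forcing $\ol{q}(x)\in\ol{Q}_1$, versus a dimension count on a subspace $W_1\cong\ol{Q}_1$). You instead impose the stronger invariant that \emph{all} $\ol{f}_v$ are independent and maintain it uniformly via the spanning argument: the solution set $S$ of the quadratic/cubic constraints spans the finite-codimensional $V'$, so $v\mapsto\ol{f}_v$ cannot collapse $S$ into a fixed finite-dimensional subspace. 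Both work; your route is more uniform, the paper's is slightly more direct at each step since it avoids the spanning argument.
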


We note that infinite residual rank implies non-degenerate by Proposition~\ref{prop:rrk-str}.

\subsection{Proposition~\ref{prop:main-inf}: a special case}

As a first step in the proof of the proposition, we verify it holds for the following particular cubic space.

\begin{definition}
The cubic space $V(\infty)$ has coordinates $\{x_i\}_{i \geq 1} \cup \{y_{i,j}\}_{i,j \geq 1}$ and cubic form \begin{align*}
f_{\infty} =& \hspace{1.25em} 3x_1(y_{1,1}^2+y_{1,2}^2+y_{1,3}^2+\cdots) \\
&+ 3x_2(y_{2,1}^2+y_{2,2}^2+y_{2,3}^2+\cdots) \\
&+ 3x_3(y_{3,1}^2+y_{3,2}^2+y_{3,3}^2+\cdots) \\
&+ \cdots
\end{align*}
In more compact notation, $f_{\infty}=3\sum_{i \geq 1}x_i q_i$ with $q_i = \sum_{j \geq 1} y_{i,j}^2$.
\end{definition}

It is clear that $V(\infty)$ has infinite residual rank (see Example~\ref{ex:rrk-inf}). Note that if $\{v_i\}_{i \ge 1} \cup \{w_{i,j}\}_{i,j \ge 1}$ is the basis of $V(\infty)$ dual to the coordinates then we have
\begin{displaymath}
\langle v_i, w_{i,j}, w_{i,j} \rangle_{f_{\infty}} = 1
\end{displaymath}
for all $i,j \ge 1$; up to permutation, the above are the only triplets of basis vectors on which the form is non-vanishing. The following is the result we are after:

\begin{lemma} \label{lem:main-inf-1}
If $W$ is a cubic space of dimension $\le \aleph_0$ then $W$ embeds into $V(\infty)$.
\end{lemma}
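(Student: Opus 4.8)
I want to embed an arbitrary countable-dimensional cubic space $W$, with form $g$, into $V(\infty)$. The idea is to enumerate a basis $w_1, w_2, \ldots$ of $W$ and build an embedding $\phi$ one basis vector at a time, using Corollary~\ref{cor:univ} as the engine at each step. Recall from \S \ref{s:prelim} that to give an embedding $W \to V(\infty)$ is to assign to each coordinate of $V(\infty)$ a linear form in the $y_j := w_j^*$'s, in a row-finite way, so that $f_\infty$ pulls back to $g$. Equivalently, I must produce vectors $\phi(w_j) \in V(\infty)$ (a column-finite assignment) with $\langle \phi(w_a), \phi(w_b), \phi(w_c)\rangle_{f_\infty} = \langle w_a, w_b, w_c\rangle_g$ for all $a,b,c$. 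The strategy is: after having chosen $\phi(w_1), \ldots, \phi(w_{n-1})$ landing in the span $U_{n-1}$ of finitely many basis vectors of $V(\infty)$, choose $\phi(w_n)$ so that the trilinear-form constraints against all previously chosen vectors — and the self-constraints $\langle \phi(w_n), \phi(w_n), \phi(w_n)\rangle$, $\langle \phi(w_n), \phi(w_n), \phi(w_j)\rangle$, $\langle \phi(w_n), \phi(w_j), \phi(w_k)\rangle$ — are all satisfied.

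The key point is that the required new vector should be sought in a \emph{fresh copy} of the building block of $V(\infty)$. Concretely, pick an index $i$ not used so far, and look for $\phi(w_n)$ of the form $c\, v_i + u$, where $u$ lies in the span of $\{w_{i,j}\}_{j \geq 1}$ together with finitely many of the already-used coordinates needed to match cross terms. Here $c$ is forced: $\langle \phi(w_n), \phi(w_n), \phi(w_n)\rangle_{f_\infty}$ picks up $3c \cdot (\text{quadratic form in the } w_{i,\bullet}\text{-part of } u) = 3c \cdot (\text{sum of squares})$, and more generally the trilinear pairings reduce to evaluating a cubic form of infinite strength (the restriction of $f_\infty$ to this fresh block plus the already-fixed part) and quadratic forms $q_i'$ of \emph{finite} strength against $\phi(w_n)$. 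The situation is exactly the hypothesis of Corollary~\ref{cor:univ}: one cubic form of infinite strength, some quadratic forms of infinite collective strength (the $q_i = \sum_j y_{i,j}^2$ from the fresh block), quadratic forms of finite strength, and linear forms. So the corollary produces the desired $\phi(w_n)$.

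**Main obstacle.** The delicate part is bookkeeping the cross terms so that the constraints genuinely take the shape Corollary~\ref{cor:univ} requires — in particular, that the quadratic constraints of "infinite collective strength" type really are the $\sum_j y_{i,j}^2$ from an unused block (hence of infinite collective strength among themselves and infinite strength after adding the fixed cubic), while every constraint coming from interaction with the finitely many previously-used coordinates is either linear or a finite-strength quadratic in $\phi(w_n)$. I expect this to work because in $V(\infty)$ the only non-vanishing triples of basis vectors are $(v_i, w_{i,j}, w_{i,j})$: any monomial in $f_\infty$ involving a coordinate from the fresh $i$-block and also an old coordinate must vanish, so the cross-constraints decouple cleanly and stay low-degree. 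One must also ensure column-finiteness of $\phi$ — automatic, since each $\phi(w_n)$ is a finite linear combination — and row-finiteness of $\phi^*$, which holds because each coordinate of $V(\infty)$ is touched by only the finitely many $\phi(w_n)$ constructed before that coordinate's block was retired; choosing always a genuinely new block for the "bulk" of $\phi(w_n)$ makes this transparent. Finally one checks $f_\infty$ pulls back to exactly $g$: the trilinear form determines the cubic (as noted after Proposition~\ref{prop:rrk-embed}, since $\operatorname{char}(k) \neq 2,3$), and we have matched it on all basis triples by construction.
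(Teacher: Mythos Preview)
Your inductive strategy is natural, but there is a genuine gap in the step where you invoke Corollary~\ref{cor:univ}. That corollary lets you prescribe arbitrary values for the cubic $f$ and for the quadratics of infinite collective strength, but it forces the finite-strength quadratics $q'_i$ and the linear forms $\ell_i$ to take the value \emph{zero}. In your setup, the quadratic constraints $\langle \phi(w_n),\phi(w_n),\phi(w_a)\rangle_{f_\infty}=\langle w_n,w_n,w_a\rangle_g$ and the linear constraints $\langle \phi(w_n),\phi(w_a),\phi(w_b)\rangle_{f_\infty}=\langle w_n,w_a,w_b\rangle_g$ must hit \emph{arbitrary} prescribed values coming from $g$, not zero. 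On the subspace you work in (the fresh $i$-block together with the finite-dimensional span of previously used coordinates), these constraints live entirely on the finite-dimensional old part, since the fresh block has no cross terms with the old coordinates; so they really are finite-strength there, and Corollary~\ref{cor:univ} gives you no control over their values. Concretely, already at step $n=2$ one can arrange $g$ so that the required linear and quadratic conditions on the old part are inconsistent. The idea is salvageable if at each step you enlarge the working subspace to include \emph{all} of the $w_{k,j}$'s for every previously used index $k$: then each $(f_\infty)_{\phi(w_a)}$ contains a term $3q_{k_a}$ and the collection has infinite collective strength, so Theorem~\ref{thm:univ} (not the corollary) applies with arbitrary targets. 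But this is not what you wrote, and making it precise still requires care with the linear constraints and with row-finiteness.

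For comparison, the paper's proof avoids induction and Corollary~\ref{cor:univ} entirely. It writes $g=\sum_{i\ge1} w_i g_i$ in triangular form (so that $w_i$ appears only in $g_1,\dots,g_i$), then uses the quadratic-space embedding Proposition~\ref{prop:qmax} to express each $g_i=\sum_{j\ge1}\ell_{i,j}^2$ with the row-finiteness built in, and simply reads off the specialization $x_i\mapsto \tfrac{1}{3}w_i$, $y_{i,j}\mapsto \ell_{i,j}$. This is shorter and sidesteps the target-value issue altogether.
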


\begin{proof}
Since every finite dimensional cubic spaces embeds into one of dimension $\aleph_0$, we can assume $W$ has dimension $\aleph_0$. Let $g \in P_3(W)$ be the given form, let $\{w_i\}_{i \ge 1}$ be coordinates on $W$, and write $g = \sum_{i=1}^{\infty} w_i g_i$ where $g_i \in P_2(W)$ and $w_i$ appears only appears in $g_1, \ldots, g_i$. Thus $w_1 g_1$ is the sum of all monomials in $g$ containing $w_1$; then $w_2 g_2$ is the sum of all remaining monomials containing $w_2$; and so on.

Since the form $\sum_{i \ge 1} z_i^2$ has infinite strength, it follows from Proposition~\ref{prop:qmax} that we can find linear forms $\ell_{i,j} \in P_3(W)$ such that $g_i=\sum_{j \ge 1} \ell_{i,j}^2$ and each $w_n$ appears in only finitely many $\ell_{i,j}$. Moreover, since $w_1, \ldots, w_{i-1}$ do not appear in $g_i$, we can assume they do not appear in $\ell_{i,j}$ for $j \ge 1$. Define a map $P_1(V(\infty)) \to P_1(W)$ by
\begin{displaymath}
x_i \mapsto \tfrac{1}{3} w_i, \qquad y_{i,j} \mapsto \ell_{i,j}
\end{displaymath}
It is clear that under this map $f_{\infty}$ specializes to $g$, and so it yields an embedding $W \to V(\infty)$.
\end{proof}

\begin{remark}
At least in characteristic~0, the lemma follows from  \cite[Lemma 4.4.1]{BDDE}, as the form $f_{\infty}$ is the maximal tensor constructed there.
\end{remark}

\subsection{Proposition~\ref{prop:main-inf}: the general case}

Fix a cubic space $(V, f)$ of infinite residual rank. We must show that any cubic space of dimension $\le \aleph_0$ embeds into $V$. Our strategy is simply to embed the cubic space $V(\infty)$ into $V$. This will establish the proposition by Lemma~\ref{lem:main-inf-1}. In what follows, we write $\ol{q}$ and $\ol{Q}$ in place of $\ol{q}_f$ and $\ol{Q}_f$, as defined in \S \ref{s:rrk}. Thus $\ol{q}(v)$ is simply another notation for $\ol{f}_v$.

Consider a finite table $T$
\begin{center}
\begin{tabular}{l|l}
$v_1$ & $w_{1,1}$, $w_{1,2}$, \ldots, $w_{1,n_1}$ \\
$v_2$ & $w_{2,1}$, $w_{2,2}$, \ldots, $w_{2,n_2}$ \\
\vdots & \vdots \\
$v_r$ & $w_{r,1}$, $w_{r,2}$, \ldots, $w_{r,n_r}$
\end{tabular}
\end{center}
with entries in $V$. We say that $T$ is \emph{good} if the following conditions hold:
\begin{enumerate}
\item The entries are linearly independent.
\item We have $\langle v_i, w_{i,j}, w_{i,j} \rangle_f=1$ for all $1 \le i \le r$ and $1 \le j \le n_i$, and $\langle -,-,- \rangle_f$ vanishes on all other input from the table; precisely, if $x$, $y$, and $z$ are three vectors appearing in the table then $\langle x, y, z \rangle_f \ne 0$ only if (after a permutation) we have $x=v_i$ and $y=z=w_{i,j}$ for some $i$ and $j$.
\item The multiset
\begin{displaymath}
\{ \ol{q}(v_i) \}_{1 \le i \le r} \cup \{ \ol{q}(w_{i,j}) \mid \ol{q}(w_{i,j}) \ne 0 \}_{1 \le i \le r, 1 \le j \le n_i} 
\end{displaymath}
is linearly independent. Explicitly, this means that some $\ol{q}(w_{i,j})$ are allowed to vanish, but after discarding these the $\ol{q}$'s of the remaining table entries are distinct and linearly independent in $\ol{Q}$.
\end{enumerate}
We write $T \subset T'$ to indicate that $T'$ is obtained from $T$ by adding elements. The following two key results show that we can enlarge good tables:

\begin{lemma} \label{lem:main-inf-2}
Given a good table $T$ with $r$ rows and an index $1 \le \ell \le r$, we can find a good table $T \subset T'$ with longer $\ell$th row.
\end{lemma}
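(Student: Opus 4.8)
The goal is to append one new entry $w_{\ell, n_\ell+1}$ to the $\ell$th row, keeping all three goodness conditions. Write $L \subset P_1(V)$ for the span of the linear forms $\langle x, -\rangle_f$ and $\langle x, y, -\rangle_f$ as $x, y$ range over the (finitely many) table entries; these cut out the orthogonality constraints we need. Set $v := v_\ell$, the label of the row we are extending. We want a vector $w \in V$ with (i) $\langle v, w, w\rangle_f = 1$; (ii) $w$ orthogonal (in the appropriate trilinear sense) to all existing table entries so that condition~(2) is preserved; and (iii) $\ol{q}(w) = 0$, which is the cleanest way to preserve condition~(3) — adding an entry whose $\ol{q}$ vanishes cannot disturb linear independence of the surviving multiset. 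The key point that makes this possible is that $V$ has infinite residual rank, so $\ol{Q}$ is infinite dimensional; the orthogonality and residual-rank constraints I am about to impose will only be finitely many, hence cannot exhaust $V$.

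The main step is to reduce finding $w$ to an application of Corollary~\ref{cor:univ}. Consider the quadratic form $f_v = \partial_v f \in P_2(V)$, whose associated bilinear form is $3\langle v, -, -\rangle_f$; I want $f_v(w) = 3$, i.e. $w$ lies on a specified level set of this quadratic. The constraint $\ol{q}(w) = 0$ says $f_w$ has finite strength; I cannot impose that directly in Corollary~\ref{cor:univ}, so instead I first \emph{choose} the target. Here is the cleaner route: since $\rrk(f) = \infty$, pick a vector $u \in V$ with $f_u = f_{v_\ell}$ of... — no. Let me restate. The genuinely delicate point is (iii): I need to produce $w$ with $\ol{q}(w) = 0$ \emph{and} $\langle v, w, w\rangle_f \neq 0$. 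I will handle this by a two-vector construction. First, using that $\ol{Q}$ is infinite dimensional while the table contributes only finitely many vectors, and using Proposition~\ref{prop:rrk-str} and the non-degeneracy of $f$, locate $w$ as follows: apply Corollary~\ref{cor:univ} with the cubic form $f$ itself (infinite strength since $\rrk(V) = \infty$ forces non-degeneracy), with the quadratic forms taken to be $f_v$ (which we must check has infinite strength — this needs $\ol{q}(v) \neq 0$, guaranteed by condition~(3) of goodness!) together with a finite list of finite-strength quadratics encoding "$\ol{q}(w) = 0$", and with $L$ as the linear constraints. Concretely: since $\ol{q}(v) \neq 0$, $f_v$ has infinite strength; write a complement so that the finitely many finite-strength quadratics $r_1, \dots, r_m$ chosen to pin down the residual-rank-zero condition are legitimately of finite strength. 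Then Corollary~\ref{cor:univ} yields $w$ with $f_v(w) = 3$, $r_t(w) = 0$, and $\ell'(w) = 0$ for $\ell' \in L$.

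I then verify the three conditions for $T' = T \cup \{w\}$ in row $\ell$. Condition~(2): $\langle v, w, w\rangle_f = \tfrac13 f_v(w) = 1$ by construction, and $\langle x, y, w\rangle_f = 0$ for all other pairs $x, y$ of table entries because those linear forms were included in $L$; symmetry of the trilinear form then covers $\langle x, w, w\rangle_f = 0$ for $x \neq v$ since $\langle x, w, -\rangle_f \in L$. Condition~(3): by the choice of the $r_t$, $\ol{q}(w) = 0$, so $w$ contributes nothing to the multiset and condition~(3) is inherited from $T$. Condition~(1), linear independence: $w$ is not in the span of the old entries because, e.g., $\langle v, w, w\rangle_f = 1$ while $\langle v, x, x\rangle_f$ and the cross terms for old $x$ are controlled by $L$ — more carefully, if $w = \sum c_x x$ then pairing with $\langle v, w, -\rangle_f$ and using orthogonality forces a contradiction with $f_v(w) = 3 \neq 0$; I will spell out this short computation.

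The step I expect to be the main obstacle is making the residual-rank constraint "$\ol{q}(w) = 0$" expressible within the hypotheses of Corollary~\ref{cor:univ}, which only accepts finitely many \emph{finite-strength} quadratic equations. The resolution is that we do not need $\ol{q}(w)$ to be literally zero for all of $\ol{Q}$ at once; we only need the \emph{new} multiset to remain linearly independent, and since the old multiset spans a fixed finite-dimensional subspace $S \subset \ol{Q}$, it suffices to guarantee $\ol{q}(w) \in S$ together with a genericity condition — or, even more simply, to exploit that $\ol{Q}$ being infinite dimensional lets us first pass to a large enough finite-dimensional coordinate subspace and argue there, then lift. I will present whichever of these is cleanest; the honest subtlety is precisely this interface between the finite-strength hypothesis of the universality corollary and the quotient $\ol{P}_2(V)$.
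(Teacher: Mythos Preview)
Your overall plan is the paper's plan: produce the new entry $w$ by a single application of Corollary~\ref{cor:univ}. You also correctly identify the real difficulty, namely condition~(c). But your resolution of that difficulty is wrong in two related ways.

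First, the alternative you settle on at the end is backwards. You write that ``it suffices to guarantee $\ol{q}(w)\in S$'' where $S\subset\ol{Q}$ is the span of the $\ol{q}$'s of the existing entries. If $\ol{q}(w)$ is a nonzero element of $S$ then the enlarged multiset is automatically \emph{dependent}, so condition~(c) fails. What you actually need is the opposite: force $\ol{q}(w)$ into a fixed complement $\ol{Q}_1$ of $S$ in $\ol{Q}$. Then either $\ol{q}(w)=0$ (harmless) or it is nonzero and outside $S$, hence linearly independent from the old classes.

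Second, and this is the point you are missing, that complement condition is a \emph{linear} constraint on $w$, not a quadratic one. The map $\ol{q}\colon V\to\ol{Q}$, $v\mapsto\ol{f}_v$, is $k$-linear; composing with the projection $\ol{Q}\to\ol{Q}/\ol{Q}_1\cong k^M$ gives finitely many linear functionals $\mu_1,\ldots,\mu_M\in P_1(V)$, and $\ol{q}(w)\in\ol{Q}_1$ is exactly $\mu_1(w)=\cdots=\mu_M(w)=0$. These go into the linear slot of Corollary~\ref{cor:univ}, together with the forms $\langle a,b,-\rangle_f$ for table entries $a,b$ and with coordinates $\lambda_1,\ldots,\lambda_N$ along a complement of $\mathrm{span}(T)$ to enforce linear independence directly. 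Your attempt to encode the residual-rank condition via ``finite-strength quadratics $r_1,\ldots,r_m$'' cannot work: the condition ``$f_w$ has finite strength'' is a condition on the form $f_w$, which varies with $w$, and is not the vanishing at $w$ of any fixed quadratic. Once you see that the relevant constraint is linear, the whole argument goes through in one shot; there is no need for the ``pass to a large coordinate subspace and lift'' manoeuvre you contemplate.

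A smaller point: your sketch of condition~(b) elides the quadratic constraints. You need $f_{v_i}(w)=0$ for $i\ne\ell$ and $f_{w_{i,j}}(w)=0$ for all $w_{i,j}$; these are quadratic in $w$, and the ones with $\ol{q}(v_i)\ne0$ or $\ol{q}(w_{i,j})\ne0$ go into the infinite-collective-strength slot of Corollary~\ref{cor:univ} (their images in $\ol{P}_2(V)$ are linearly independent by condition~(c)), while those with $\ol{q}(w_{i,j})=0$ go into the finite-strength slot. Your $L$ only contains the linear forms $\langle a,b,-\rangle_f$ with both $a,b$ already in the table.
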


\begin{proof}
Without loss of generality, take $\ell=1$. Let $V_0 \subset V$ be the span of the vectors in $T$, and choose any complementary subspace $V_1$ to $V_0$ in $V$. We have a natural map $V \to V/V_1 \cong k^N$, where $N=\dim(V_0)$ is the number of entries in the table. Let $\lambda_1, \ldots, \lambda_N$ be the components of this map. These are linear functionals on $V$, and their joint kernel is $V_1$.

Let $\ol{Q}_0 \subset \ol{Q}$ be the span of the images under $\ol{q}$ of the entries of $T$, and let $\ol{Q}_1$ be any complementary space to $\ol{Q}_0$ in $\ol{Q}$. We have a natural map
\begin{displaymath}
V \stackrel{\ol{q}}{\to} \ol{Q} \to \ol{Q}/\ol{Q}_1 \cong k^M
\end{displaymath}
where $M=\dim(\ol{Q}_0)$. Let $\mu_1, \ldots, \mu_M$ be the components of this map. Thus $x$ belongs to the joint kernel of the $\mu$'s if and only if $\ol{q}(x) \in \ol{Q}_1$.

By Corollary~\ref{cor:univ}, we can find a vector $x \in V$ satisfying the following conditions:
\begin{enumerate}[(1)]
\item $f(x)=0$.
\item $f_{v_1}(x)=3$ (or, equivalently, $\langle v_1, x, x \rangle_f=1$).
\item $f_{v_i}(x)=0$ for $2 \le i \le r$.
\item $f_{w_{i,j}}(x)=0$ for all $w_{i,j}$ in $T$.
\item $\langle a, b, x \rangle_f=0$ for all $a$, $b$ in $T$.
\item $\lambda_i(x)=0$ for all $1 \le i \le N$.
\item $\mu_i(x)=0$ for all $1 \le i \le M$.
\end{enumerate}
We now add $x$ to the first row of $T$ to obtain a new table $T'$. By condition (2), $x$ is non-zero. By condition (6), $x$ belongs to $V_1$. It follows that $x$ is linearly independent from the other table entries, and so $T'$ satisfies (a). It is clear from the above conditions that $T'$ satisfies (b). Condition (7) ensures that $\ol{q}(x)$ belongs to $\ol{Q}_1$, and so either $\ol{q}(x)=0$ or it is linearly independent of the other $\ol{q}$'s in the table. Thus $T'$ satisfies (c).
\end{proof}

\begin{lemma} \label{lem:main-inf-3}
Given a good table $T$, we can find a good table $T \subset T'$ with an additional row.
\end{lemma}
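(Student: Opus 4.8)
The plan is to enlarge $T$ by appending one empty row ``$v_{r+1}:=x$'', which Lemma~\ref{lem:main-inf-2} can then fill with entries. Writing out conditions (a)--(c) for the enlarged table $T'$, it suffices to produce $x\in V$ with $f(x)=0$, with $f_a(x)=0$ and $\langle a,b,x\rangle_f=0$ for all entries $a,b$ of $T$, and --- the essential new requirement, forced by (c) for the new row --- with $\ol{f}_{\!x}=\ol{q}(x)\notin\ol{Q}_0:=\mathrm{span}\{\ol{q}(a):a\text{ an entry of }T\}$. Indeed, the equalities make every ``new'' value of $\langle-,-,-\rangle_f$ on a triple of entries of $T'$ (namely $\langle x,x,x\rangle_f$, $\langle a,x,x\rangle_f$, $\langle a,b,x\rangle_f$) vanish, so (b) holds; and $\ol{q}(x)\notin\ol{Q}_0$ forces $x\notin\mathrm{span}(T)$, since $\ol{q}$ maps $\mathrm{span}(T)$ onto $\ol{Q}_0$, so (a) and (c) hold as well.

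Every one of these conditions except $\ol{q}(x)\notin\ol{Q}_0$ is of the kind covered by Corollary~\ref{cor:univ}. Let $h_1,\dots,h_m$ be the derivatives $f_a$ of infinite strength, $a$ an entry of $T$: condition (c) for $T$ says precisely that $\ol{h}_1,\dots,\ol{h}_m$ are a basis of $\ol{Q}_0$, so the $h_i$ have infinite collective strength; the remaining $f_a$ have finite strength, hence are sums of finitely many squares of linear forms; let $L\subset P_1(V)$ be the (finite-dimensional) span of all these linear forms together with the finitely many forms $\langle a,b,-\rangle_f$. To also force $\ol{q}(x)\notin\ol{Q}_0$ I would impose one further non-vanishing condition, and this is where $\rrk(V)=\infty$ enters: $\ol{Q}=\ol{Q}_f$ is infinite dimensional, hence so is $\ol{Q}/\ol{Q}_0$, so there is an infinite linearly independent family of linear functionals on $\ol{Q}$ that vanish on $\ol{Q}_0$; pulling them back along the surjection $\ol{q}\colon V\to\ol{Q}$ (whose dual is therefore injective) gives an infinite linearly independent family of linear forms $\psi\circ\ol{q}$ on $V$. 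Since only finitely many linear forms occur above, I may pick one such $\mu=\psi\circ\ol{q}$ with $\mu\notin L$.

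Now I would apply Theorem~\ref{thm:univ} --- rather than Corollary~\ref{cor:univ}, as a nonzero value on a linear form is needed --- to the forms $f$, $h_1,\dots,h_m$, a basis of $L$, and $\mu$, prescribing the value $1$ on $\mu$ and $0$ on every other form. These forms are linearly independent in degree $1$ because $\mu\notin L$, and they have infinite collective strength: a nontrivial homogeneous combination of degree $3$ is a nonzero multiple of $f$, of infinite strength since $V$ is non-degenerate (Proposition~\ref{prop:rrk-str}); of degree $2$ it is a nontrivial combination of the $h_i$, of infinite strength since the $\ol{h}_i$ are linearly independent; of degree $1$ it is a nonzero linear form. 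Theorem~\ref{thm:univ} thus yields $x\in V$ with $f(x)=0$, each $h_i(x)=0$, every element of $L$ vanishing at $x$ (so $\langle a,b,x\rangle_f=0$, and $f_a(x)=0$ also for the finite-strength $f_a$), and $\mu(x)=1$. The last equation says $\psi(\ol{q}(x))=1\ne 0$ while $\psi|_{\ol{Q}_0}=0$, so $\ol{q}(x)\notin\ol{Q}_0$; appending $x$ as the new row $v_{r+1}$ gives a good table, and one checks (a)--(c) directly.

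The real difficulty is isolating the right extra condition. Everything but ``$\ol{q}(v_{r+1})\notin\ol{Q}_0$'' is an equality, perfectly suited to the universality results, whereas this last one is an open condition and cannot be read off from finitely many point-values of $f$ and its derivatives at $x$ --- for example $f_u(x)\ne 0$ for an auxiliary $u$ with $\ol{f}_{\!u}\notin\ol{Q}_0$ gives no information about $\ol{f}_{\!x}$, since $\langle u,x,x\rangle_f$ and $\langle x,u,u\rangle_f$ are unrelated. Replacing it by the single linear non-vanishing condition $\mu(x)\ne 0$, for a $\mu$ that factors through $\ol{q}$, works exactly because infinite residual rank provides more such linear forms than there are pre-existing linear constraints; this is the one place the hypothesis $\rrk(V)=\infty$ is used in the proof of the lemma.
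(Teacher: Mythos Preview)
Your proof is correct and takes a genuinely different route from the paper's. The paper does not invoke Theorem~\ref{thm:univ} at all for this lemma: instead, it chooses an infinite-dimensional subspace $W_1 \subset V$ that maps isomorphically under $\ol{q}$ to a complement $\ol{Q}_1$ of $\ol{Q}_0$ in $\ol{Q}$, so that every nonzero $x \in W_1$ automatically satisfies $\ol{q}(x) \notin \ol{Q}_0$; the remaining requirements are then finitely many homogeneous polynomial equations on $W_1$, which admit a nonzero common solution by a bare dimension count (restrict to a finite-dimensional slice of $W_1$ of dimension exceeding the number of equations). You instead stay in all of $V$ and convert the open condition $\ol{q}(x) \notin \ol{Q}_0$ into a single linear equation $\mu(x)=1$ for a functional $\mu$ factoring through $\ol{q}$ and chosen outside $L$, then feed the whole package to Theorem~\ref{thm:univ}. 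Your approach is more uniform with the proof of Lemma~\ref{lem:main-inf-2} (one tool throughout), while the paper's argument is self-contained for this step and sidesteps the verification of collective strength; both use the hypothesis $\rrk(V)=\infty$ in exactly the same place, namely to guarantee that the supply of ``new directions'' (your family of functionals $\psi\circ\ol{q}$, the paper's subspace $W_1$) is infinite.
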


\begin{proof}
Let $V_0$, $V_1$, $\lambda_1, \ldots, \lambda_N$, $\ol{Q}_0$, $\ol{Q}_1$ be as in the previous proof. By assumption the space $\ol{Q}$ has infinite dimension, and so $\ol{Q}_1$ has infinite dimension too. Let $W_1$ be a subspace of $V$ mapping isomorphically to $\ol{Q}_1$ under $\ol{q}$. Thus if $x$ is a non-zero element of $W_1$ then $\ol{q}(x)$ is an element of $\ol{Q}$ that does not belong to $\ol{Q}_0$.

We claim that we can find a non-zero element $x$ of $W_1$ satisfying the following conditions:
\begin{enumerate}[(1)]
\item $f(x)=0$
\item $f_{v_i}(x)$ for all $1 \le i \le r$.
\item $f_{w_{i,j}}(x)=0$ for all $w_{i,j}$ in $T$.
\item $\langle a, b, x \rangle_f=0$ for all $a,b$ in $T$.
\item $\lambda_i(x)=0$ for all $1 \le i \le N$.
\end{enumerate}
Indeed, the above conditions amount to the vanishing of finitely many, say $m$, homogeneous polynomials, and so there is a non-zero solution in $W_1$ since $W_1$ is infinite dimensional (in fact, one can simply restrict to a subspace of $W_1$ of dimension $m+2$).

We now let $T'$ be the table obtained from $T$ by adding a new row with $v_{r+1}=x$ (and with $n_{r+1}=0$, i.e., there are no $w$'s in the new row). Since $x$ is non-zero and belongs to $V_1$ (by condition (5) above), it is linearly independent from the entries of $T$, and so $T'$ satisfies (a). The above conditions clearly imply that $T'$ satisfies (b). Finally, since $x$ is a non-zero element of $W_1$, we find that $\ol{q}(x)$ does not belong to $\ol{Q}_0$, and so $T'$ satisfies (c).
\end{proof}

Proposition~\ref{prop:main-inf} follows from the next lemma:

\begin{lemma} \label{lem:main-inf-4}
The cubic space $V(\infty)$ embeds into $V$.
\end{lemma}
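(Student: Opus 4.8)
The plan is to build the embedding $V(\infty) \to V$ by exhibiting the image of its standard basis inside $V$ as an infinite good table, constructed as the union of an increasing chain of finite good tables via the two enlargement lemmas. Recall that $V(\infty)$ has basis $\{v_i\}_{i \ge 1} \cup \{w_{i,j}\}_{i,j \ge 1}$ on which the trilinear form of $f_\infty$ satisfies $\langle v_i, w_{i,j}, w_{i,j}\rangle = 1$ and vanishes on every other triple of basis vectors (up to permutation). An infinite table with rows indexed by $i \ge 1$ and entries $v_i$ (first column) followed by $w_{i,1}, w_{i,2}, \ldots$ — whose every finite sub-table is good in the sense of conditions (a), (b), (c) — gives exactly such a configuration of linearly independent vectors in $V$, and hence a linear map $P_1(V(\infty)) \to P_1(V)$ (dually, $V(\infty) \to V$) under which $f_\infty$ specializes to $f$ restricted to the span, i.e. an embedding of cubic spaces. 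The column-finiteness needed to make the dual map well-defined is automatic since each basis vector of $V(\infty)$ is sent to a single basis-like vector of $V$.

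First I would start with the empty good table $T_0$ (vacuously good). Then I would fix an enumeration of the "tasks'' to be performed: for each $i \ge 1$, add an $i$th row; and for each pair $(i, j)$ with $i \ge 1$, $j \ge 1$, lengthen the $i$th row to have at least $j$ entries beyond the first. I would interleave these tasks (say by a diagonal enumeration of $\mathbb{N} \times \mathbb{N}$) so that every task is eventually addressed, and construct a chain $T_0 \subset T_1 \subset T_2 \subset \cdots$ where $T_{m+1}$ is obtained from $T_m$ by applying Lemma~\ref{lem:main-inf-3} (to create a new row) or Lemma~\ref{lem:main-inf-2} (to lengthen a designated row), according to the $m$th task; at each stage the relevant lemma guarantees $T_{m+1}$ is again good. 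Taking $T_\infty = \bigcup_m T_m$, every entry of $T_\infty$ lies in some $T_m$, and every finite subset of $T_\infty$ lies in some $T_m$, so conditions (a), (b), (c) for $T_\infty$ follow from goodness of the finite stages.

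The one genuine subtlety — and the step I expect to need the most care — is condition (c) in the limit, concerning the $\ol{q}$-images of the table entries. Goodness only requires that the \emph{nonzero} $\ol{q}(w_{i,j})$ together with all $\ol{q}(v_i)$ form a linearly independent multiset; in the limit this is still a statement about finitely many vectors at a time, so it passes to the union. What I must check is that this is exactly what is needed: in $V(\infty)$ we have $\ol{q}_{f_\infty}(v_i) = \ol{q_i}$ (by Proposition~\ref{prop:rrk-formula}), the classes $\ol{q_i}$ are linearly independent in $\ol{P}_2(V(\infty))$, and $\ol{q}_{f_\infty}(w_{i,j})$ may well vanish or not. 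So for the specialization to be consistent with the residual-rank structure, no additional relations are forced — the table data is precisely a valid specialization target. Concretely, once $T_\infty$ is in hand, I would define $\phi^* \colon P_1(V(\infty)) \to P_1(V)$ on coordinates by sending the coordinate dual to $v_i$ to the coordinate (functional from the dual basis of the span of $T_\infty$) dual to the $i$th first-column entry, and similarly for the $w_{i,j}$'s; condition (b) says the trilinear form of $f$ on the span of $T_\infty$ has exactly the same "multiplication table'' as $f_\infty$, so $f_\infty$ specializes to $f$ and we get the desired embedding $V(\infty) \hookrightarrow V$.

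Finally, combining this with Lemma~\ref{lem:main-inf-1} — which says every cubic space of dimension $\le \aleph_0$ embeds into $V(\infty)$ — yields that any such $W$ embeds into $V$, which is Proposition~\ref{prop:main-inf}. Thus the whole of Theorem~\ref{mainthm} in the infinite-residual-rank case reduces to the bookkeeping of the interleaved construction above, the real content having been packaged into Lemmas~\ref{lem:main-inf-2} and~\ref{lem:main-inf-3} and their reliance on Corollary~\ref{cor:univ}.
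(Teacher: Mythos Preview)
Your proposal is correct and follows essentially the same approach as the paper: build an increasing chain of good tables via Lemmas~\ref{lem:main-inf-2} and~\ref{lem:main-inf-3}, take the union $T_\infty$, and observe that the span of its entries is a subspace of $V$ isomorphic to $V(\infty)$ as a cubic space. Your extended discussion of condition~(c) in the limit is more than is needed---(c) exists only to enable further applications of the enlargement lemmas during the construction, and once $T_\infty$ is built, conditions~(a) and~(b) alone suffice to identify the span with $V(\infty)$.
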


\begin{proof}
Applying the Lemmas~\ref{lem:main-inf-2} and~\ref{lem:main-inf-3} repeatedly, we can construct a chain $T_1 \subset T_2 \subset \cdots$ of good tables such that the number of rows goes to infinity, and the length of any fixed row goes to infinity. Let $T_{\infty}$ be the union of the $T_i$'s. The table $T_{\infty}$ has infinitely many rows (indexed by $\bN$), and each row is infinite (and indexed by $\bN$).

Let $W \subset V$ be the span of the entries of $T_{\infty}$. The entries of $T_{\infty}$ form a basis for $W$. In this basis, the restriction of $f$ is exactly $f_{\infty}$, where $x_i$ and $y_{i,j}$ are dual vector to $v_i$ and $w_{i,j}$. Thus $W$ is isomorphic to $V(\infty)$ as a cubic space, which completes the proof.
\end{proof}

\section{Proof of Theorem~\ref{mainthm}: finite residual rank} \label{s:rrk-fin}

We now prove Theorem~\ref{mainthm} when $V$ has finite residual rank. We recall the statement:

\begin{proposition} \label{prop:main-fin}
Let $V$ be a non-degenerate cubic space of finite residual rank $r$. If $W$ is any cubic space of countable (or finite) dimension and residual rank at most $r$ then $W$ embeds into $V$.
\end{proposition}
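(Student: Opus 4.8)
The strategy mirrors the infinite-rank case: reduce to embedding a single universal model, then build the embedding of that model row-by-row using Corollary~\ref{cor:univ}. First I would introduce the analogue of $V(\infty)$ for finite residual rank $r$: a cubic space $V(r)$ carrying $r$ ``special'' directions $v_1, \dots, v_r$ whose derivatives $\ol{q}(v_i)$ span the full residual rank, together with an infinite ``non-degenerate core'' of the $\sum x_i q_i$ shape (or simply a copy of $\sum z_\ell^3$) witnessing non-degeneracy but contributing nothing to residual rank, as in Example~\ref{ex:rrk0}. The first lemma to prove is that every cubic space $W$ of countable dimension with $\rrk(W) \le r$ embeds into $V(r)$; this is the finite-rank counterpart of Lemma~\ref{lem:main-inf-1}. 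Here one writes $g = \sum w_i g_i$, uses that only $\le r$ of the $\ol{g}_i$ are linearly independent modulo finite strength to split off an $r$-dimensional ``residual part'', and handles the remaining infinite-strength quadratic data via Proposition~\ref{prop:qmax} exactly as before; the bookkeeping is more delicate than in the infinite case because one must keep the count of genuinely independent residual quadratics bounded by $r$.

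The second and main part is to embed $V(r)$ into an arbitrary non-degenerate $V$ of residual rank exactly $r$. Again I would use good tables, but now the table must also record $r$ fixed vectors $u_1, \dots, u_r \in V$ whose images $\ol{q}(u_1), \dots, \ol{q}(u_r)$ form a basis of the (finite-dimensional!) space $\ol{Q}_f$, chosen once at the start. The table-enlargement lemmas (the analogues of Lemmas~\ref{lem:main-inf-2} and~\ref{lem:main-inf-3}) then proceed by the same Corollary~\ref{cor:univ} argument, with one crucial difference: when we look for a new vector $x$, we now also impose the conditions $\mu_i(x) = 0$ forcing $\ol{q}(x)$ to land in the \emph{chosen} span of the table's current residual vectors rather than in an arbitrary complement. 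Since $f$ has infinite strength (non-degeneracy) and since the quadratics $f_{v_i}$, together with the finite-strength quadratics $\langle a, -, - \rangle_f$ cut out by table entries, split into an infinite-collective-strength part and a finite-strength part, Corollary~\ref{cor:univ} applies verbatim. Iterating, one builds $T_\infty$ and reads off the desired embedding $V(r) \hookrightarrow V$; combined with the first lemma this gives $W \hookrightarrow V$.

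The subtle point, and what I expect to be the main obstacle, is controlling the residual rank during the construction: in the infinite case we could always push new vectors into a complement $\ol{Q}_1$ of everything seen so far, but here $\ol{Q}_f$ is $r$-dimensional and \emph{saturated} almost immediately, so after finitely many steps every new table entry $x$ must have $\ol{q}(x)$ lying in the span already present. One must therefore verify that $\ol{q}(x)$ can always be prescribed to be the \emph{correct} element of that span — i.e.\ that the relevant $\mu$-functionals, which are linear combinations of the $f_v$'s for $v$ in the table, have finite strength (they do, being expressible via the finite-strength quadratics cut out by $\ol{Q}_0 \subset \ol{Q}_f$ once one pushes through Proposition~\ref{prop:rrk-str}) and hence can be included among the finite-strength constraints of Corollary~\ref{cor:univ}. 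A secondary obstacle is ensuring the model $V(r)$ actually has residual rank exactly $r$ and is non-degenerate, and that the table entries never force a linear dependence among the $\ol{q}(u_i)$ — this is where the careful multiset condition (c) in the definition of a good table, suitably adapted to track which table entries reuse which of the $r$ basis directions of $\ol{Q}_f$, does the work.
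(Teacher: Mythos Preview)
Your overall strategy is exactly the paper's: build a model $V(r)$, show every $W$ with $\rrk(W)\le r$ embeds in $V(r)$, then embed $V(r)$ into $V$ via good tables whose left column is $r$ vectors $v_1,\dots,v_r$ fixed once so that $\ol{q}(v_1),\dots,\ol{q}(v_r)$ is a basis of $\ol{Q}_f$. There is one genuine missing ingredient and two smaller corrections.

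For $W\hookrightarrow V(r)$: after writing $g=\sum_i w_ig_i$ with $\ol{g}_i=0$ for $i>r$, the first $r$ terms go via Proposition~\ref{prop:qmax} as in the infinite case, but the terms $w_ig_i$ for $i>r$ are \emph{not} ``infinite-strength quadratic data''---each such $g_i$ has \emph{finite} strength, and these must land in the $\sum_\ell z_\ell^3$ part of $V(r)$. The paper supplies a short lemma you omitted: if $q$ has finite strength then $aq$ is a finite sum of cubes of linear forms (write $q$ as a sum of squares and decompose the resulting cubic in finitely many variables). Without this, nothing maps into the $z$-block. On the table side, two fixes: the $\mu_i$ are the components of the \emph{linear} map $\ol{q}\colon V\to\ol{Q}\cong k^r$, hence linear forms rather than finite-strength quadratics; Corollary~\ref{cor:univ} takes them among the $\ell$'s, and $\mu_i(x)=0$ for all $i$ simply means $\ol{q}(x)=0$, which is the correct condition~(c) for new entries. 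And since the $v_i$ are chosen only for their $\ol{q}$-images, you cannot force $\langle v_i,v_j,v_k\rangle_f=0$, so the restriction of $f$ to the span of the final table carries an extra cubic $g(x_1,\dots,x_r)$; the paper absorbs it at the very end by writing $g=\sum_{i=1}^m\ell_i^3$ and passing to the subspace cut out by $z_i+\ell_i=0$ for $i\le m$. You should anticipate this step.
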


We note that the converse statement---if $W$ embeds into $V$ then $\rrk(W) \le r$---follows from Proposition~\ref{prop:rrk-embed}.

\subsection{Proof of Proposition~\ref{prop:main-fin}: a special case}

As in \S \ref{s:rrk-inf}, we first prove the result in a special case. This time, for the following cubic space:

\begin{definition}\label{dfn:finiteAle}
The cubic space $V(r)$ has coordinates
\begin{displaymath}
\{x_i\}_{1 \le i \le r} \cup \{y_{i,j}\}_{i,j \geq 1} \cup \{z_i\}_{i \ge 1}
\end{displaymath}
and cubic form 
\begin{displaymath}
f_r = 3\sum_{i = 1}^r x_i q_i + \sum_{i \geq 1} z_i^3
\end{displaymath}
where $q_i = \sum_{j \geq 1} y_{i,j}^2$.
\end{definition}
% Let $V(r)$ be the cubic space with coordinates
% \begin{displaymath}
% \{x_i\}_{1 \le i \le r} \cup \{y_{i,j}\}_{1 \le i \le r, 1 \le j} \cup \{z_i\}_{1 \le i}
% \end{displaymath}
% and with cubic form
% \begin{displaymath}
% \sum_{i=1}^r x_i q_i + \sum_{i \ge 1} z_i^3
% \end{displaymath}
% where $q_i=\sum_{j \ge 1} y_{i,j}^2$.
It follows from Proposition~\ref{prop:rrk-formula} that $V(r)$ has residual rank $r$. Since the above form specializes to the infinite strength form $\sum_{i \geq 1} z_i^3$ (set each $x_i$ to zero), it has infinite strength.

\begin{lemma} \label{lem:main-fin-2}
Let $W$ be a vector space, and let $a \in P_1(V)$ and $q \in P_2(V)$ with $q$ of finite strength. Then there exists $b_1, \ldots, b_n \in P_1(V)$ such that $a q = \sum_{i=1}^n b_i^3$.
\end{lemma}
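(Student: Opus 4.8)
The plan is to reduce the statement to one elementary identity about linear forms, so that no real work is needed beyond bookkeeping.

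\emph{Step 1: diagonalize $q$.} Since $q$ has finite strength, we may write $q=\sum_{i=1}^{s}g_ih_i$ with $g_i,h_i\in P_1(V)$ (the degree constraint in the definition of strength forces both factors to be linear). All of these linear forms lie in a common finite-dimensional subspace $L\subseteq P_1(V)$, so $q\in\Sym^2(L)$. Because $k$ is algebraically closed of characteristic $\ne 2$, a quadratic form on a finite-dimensional space is a sum of squares of linear forms, so $q=\sum_{j=1}^{m}\ell_j^2$ with each $\ell_j\in L\subseteq P_1(V)$. Hence $aq=\sum_{j=1}^{m}a\ell_j^2$, and it suffices to express each $a\ell_j^2$ as a sum of cubes of linear forms.

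\emph{Step 2: write $a\ell^2$ as a sum of three cubes.} Here I would use the identity
\[
(a+\ell)^3+(a-\ell)^3 = 2a^3 + 6a\ell^2,
\]
which gives $a\ell^2=\tfrac16(a+\ell)^3+\tfrac16(a-\ell)^3-\tfrac13 a^3$ since $\operatorname{char}(k)\ne 2,3$. Because $k$ is algebraically closed, every scalar multiple $cb^3$ of a cube is again a cube, namely $(c^{1/3}b)^3$, so the right-hand side exhibits $a\ell^2$ as a sum of three cubes of linear forms. Summing over $j$ then presents $aq$ as a sum of $3m$ cubes of elements of $P_1(V)$, as required (and if $a=0$ or $q=0$ one takes the empty sum).

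\emph{Main obstacle.} There is essentially none. The only points requiring care are: the passage from finite strength of $q$ to a genuine representation $q=\sum_j\ell_j^2$ with $\ell_j\in P_1(V)$, which is finite-dimensional linear algebra once one notes that the factors $g_i,h_i$ already span a finite-dimensional subspace; and the use of the standing hypotheses $k=\bar k$ and $\operatorname{char}(k)\ne 2,3$ to invert $6$ and to extract the relevant square and cube roots. If one prefers to avoid the explicit identity, one can instead observe that for linearly independent $a,\ell$ the four cubes $(a+t_i\ell)^3$ with $t_1,\dots,t_4$ distinct span the four-dimensional space $\langle a^3,a^2\ell,a\ell^2,\ell^3\rangle$ (a Vandermonde determinant computation), so $a\ell^2$ lies in their span; the three-term identity above is simply a cleaner incarnation of this.
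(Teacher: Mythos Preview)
Your proof is correct and follows essentially the same route as the paper: diagonalize $q$ as a sum of squares of linear forms, then express the resulting finite-dimensional cubic $aq$ as a sum of cubes of linear forms. The only difference is cosmetic---where the paper appeals to the general fact that every cubic form in finitely many variables over an algebraically closed field is a sum of cubes, you give the explicit three-term identity $a\ell^2=\tfrac16(a+\ell)^3+\tfrac16(a-\ell)^3-\tfrac13 a^3$, which has the minor bonus of yielding an explicit bound of $3m$ cubes.
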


\begin{proof}
Put $a=a_1$. Since $q$ has finite strength, we can write $q=\sum_{i=2}^m a_i^2$ for some $a_2, \ldots, a_m \in P_1(V)$. We can thus think of $aq$ as a homogeneous degree-$3$ polynomial in $a_1, \ldots, a_m$. The result now follows since every homogeneous degree-$3$ polynomial can be written as a sum of cubes of linear forms.
\end{proof}

\begin{remark}
In fact, the reasoning in the lemma can be used to prove the following statement: a cubic form is a finite sum of cubes of linear forms if and only if it has finite strength and residual rank zero.
\end{remark}

\begin{lemma} \label{lem:main-fin-1}
Let $W$ be a cubic space of dimension $\le \aleph_0$ and $\rrk(W) \le r$. Then $W$ embeds into $V(r)$.
\end{lemma}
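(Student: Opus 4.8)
Let $g\in P_3(W)$ be the form on $W$. By the specialization picture of \S\ref{s:prelim}, to embed $W$ into $V(r)$ it suffices to exhibit linear forms $\ell_t\in P_1(W)$ ($1\le t\le r$), a family $e_{t,j}\in P_1(W)$, and a family $b_p\in P_1(W)$, each coordinate $w_m$ of $W$ occurring in only finitely many of these, such that
\[
g \;=\; \sum_{t=1}^r \ell_t\Bigl(\sum_{j\ge 1} e_{t,j}^2\Bigr) \;+\; \sum_{p} b_p^3
\]
(after absorbing the constant $3$ into $\ell_t$). Writing $Q_t=\sum_j e_{t,j}^2$ and $h=\sum_p b_p^3$, the plan is thus to produce a decomposition $g=\sum_{t=1}^r\ell_tQ_t+h$ in which $h$ has residual rank $0$ and each $(W,Q_t)$ is an arbitrary countable-dimensional quadratic space: indeed Proposition~\ref{prop:qmax} then realizes $(W,Q_t)$ inside the non-degenerate quadratic space $(k^{(\mathbb{N})},\sum z_j^2)$, giving the $e_{t,j}$ with the required column-finiteness. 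So the real content is (i) a separate treatment of the residual-rank-zero case, showing such $h$ is a column-finite sum of cubes, and (ii) peeling off exactly $r$ terms $\ell_tQ_t$ from $g$ so that the remainder has residual rank $0$.

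\emph{Step (i): the case $\rrk(W)=0$.} Take coordinates $\{w_n\}_{n\ge 1}$ on $W$ and the triangular decomposition $g=\sum_{n\ge 1}w_nG_n$ with $G_n\in P_2(W)$ involving only $w_n,w_{n+1},\dots$ and each $w_n$ occurring in $G_1,\dots,G_n$ only (as in Lemma~\ref{lem:main-inf-1}). By Proposition~\ref{prop:rrk-formula}, $\overline{Q}_g=\operatorname{span}\{\overline{G}_n\}$, so $\rrk(W)=0$ forces every $G_n$ to have finite strength. Lemma~\ref{lem:main-fin-2} then writes $w_nG_n$ as a finite sum of cubes of linear forms on $W$, whose variables lie among those of $w_nG_n$. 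Since a coordinate $w_m$ appears in $G_n$ only for $n\le m$, it appears in only finitely many of these cubes; collecting them over all $n$ gives the desired column-finite expression $h=\sum_p b_p^3$, i.e.\ an embedding $W\hookrightarrow V(0)$.

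\emph{Step (ii): reduction of general $r$ to Step (i).} Again use the triangular decomposition $g=\sum_n w_nG_n$; by Proposition~\ref{prop:rrk-formula}, $d:=\dim\overline{Q}_g=\rrk(W)\le r$. Choose $n_1<\cdots<n_d$ with $\overline{G}_{n_1},\dots,\overline{G}_{n_d}$ a basis of $\overline{Q}_g$, write $\overline{G}_n=\sum_{t=1}^d c_{n,t}\overline{G}_{n_t}$, and set $\ell_t=\sum_{n\ge 1}c_{n,t}w_n\in P_1(W)$ and $Q_t=G_{n_t}\in P_2(W)$. Then
\[
g=\sum_{t=1}^d \ell_tQ_t + h,\qquad h:=\sum_{n\ge 1}w_n\Bigl(G_n-\sum_{t=1}^d c_{n,t}G_{n_t}\Bigr),
\]
and $h$ is a well-defined element of $P_3(W)$ since each monomial of $W$ receives contributions from at most three values of $n$. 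The key point is $\rrk(h)=0$: for each coordinate $w_m$, since $h$ differs from $g$ by the finite sum $\sum_t\ell_tQ_t$, the derivation $\partial_{w_m}$ gives $\partial_{w_m}h=\partial_{w_m}g-\sum_t\bigl(c_{m,t}Q_t+\ell_t\,\partial_{w_m}Q_t\bigr)$; each $\ell_t\,\partial_{w_m}Q_t$ is a product of linear forms, hence of finite strength, so in $\overline{P}_2(W)$ we get $\overline{\partial_{w_m}h}=\overline{G}_m-\sum_t c_{m,t}\overline{G}_{n_t}=0$. Thus $\overline{Q}_h=0$. Now apply Step (i) to $h$ and Proposition~\ref{prop:qmax} to each $(W,Q_t)$, pad with $x_t,y_{t,j}\mapsto 0$ for $d<t\le r$ and send any unused $z$-coordinates to $0$, and verify once more that every $w_m$ meets only finitely many of the resulting linear forms: the $\ell_t$ are finite in number ($\le r$), and column-finiteness for the $e_{t,j}$ and $b_p$ comes from Proposition~\ref{prop:qmax} and Step (i). This yields the embedding $W\hookrightarrow V(r)$.

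\emph{Expected main obstacle.} The routine-looking but delicate part is getting the decomposition in Step (ii) so that $\rrk(h)$ is exactly $0$; this is what dictates the precise choice $\ell_t=\sum_n c_{n,t}w_n$ with the $c_{n,t}$ read off as coordinates of $\overline{G}_n$ in the chosen basis of $\overline{Q}_g$, and the verification leans on Proposition~\ref{prop:rrk-formula} together with the fact that linear$\,\times\,$linear terms drop out in $\overline{P}_2(W)$. One must also take care that $h$ is a genuine element of $P_3(W)$ and that introducing the (possibly non-finitely-supported) forms $\ell_t$ does not break column-finiteness — harmless here only because there are at most $r$ of them.
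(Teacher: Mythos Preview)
Your proof is correct and follows essentially the same strategy as the paper's: split $g$ into $r$ terms of the form (linear)$\times$(quadratic) plus a residual-rank-zero remainder, then handle the quadratic pieces via Proposition~\ref{prop:qmax} and the remainder as a column-finite sum of cubes via Lemma~\ref{lem:main-fin-2}. The only difference is in how the split is achieved: the paper chooses the basis $\{v_i\}$ of $W$ at the outset so that $\ol q_g(v_1),\dots,\ol q_g(v_r)$ span $\ol Q_g$ and $v_i\in\ker\ol q_g$ for $i>r$, which makes $\ell_t=w_t$ a coordinate function and $g_i$ already of finite strength for $i>r$, eliminating your subtraction step and the separate verification that $\rrk(h)=0$.
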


\begin{proof}
As in Lemma~\ref{lem:main-inf-1}, we can assume $\dim(W)=\aleph_0$. Let $g$ be the cubic form on $W$. By assumption, $\dim(\ol{Q}_g) \le r$. Let $\{v_i\}_{i \ge 1}$ be a basis of $W$ such that $\ol{q}_g(v_1), \ldots, \ol{q}_g(v_r)$ span $\ol{Q}_g$ and the $v_i$ with $i>r$ belong to $\ker(\ol{q}_g)$. Let $w_i$ be dual to $v_i$ and write $g=\sum_{i \ge 1} w_i g_i$ where $w_i$ appears only in $g_1, \ldots, g_i$ (see Lemma~\ref{lem:main-inf-1}). By Proposition~\ref{prop:rrk-formula}, we have $\ol{g}_i=\ol{q}_g(v_i)$, and so $g_i$ has finite strength for $i>r$. For $1 \le i \le r$, write $g_i=\sum_{j \ge 1} \ell_{i,j}^2$, where $\ell_{i,j} \in P_1(W)$ and each $w$ coordinate appears in only finitely many $\ell_{i,j}$ (see Lemma~\ref{lem:main-inf-1}).

Now, $g_{r+1}$ has finite strength. Thus by Lemma~\ref{lem:main-fin-2}, we have an expression
\begin{displaymath}
w_{r+1} g_{r+1} = \sum_{j=1}^{n_{r+1}} m_j^3
\end{displaymath}
where $n_{r+1}$ is some integer and the $m_j$ belong to $P_1(W)$. Of course, we can assume that $m_j$ only uses the coordinates $w_i$ with $i \ge r+1$, since these are the only ones appearing in the left side. Similarly, we have an expression
\begin{displaymath}
w_{r+2} g_{r+2} = \sum_{j=n_{r+1}+1}^{n_{r+2}} m_j^3
\end{displaymath}
where now the $m_j$'s only use the $w_i$'s with $i \ge r+2$. Continue in this manner for all remaining $w_i g_i$.

We now define a map $P_1(V(r)) \to P_1(W)$ by
\begin{displaymath}
x_i \mapsto w_i, \qquad y_{i,j} \mapsto \ell_{i,j}, \qquad z_i \mapsto m_i.
\end{displaymath}
It is clear that each $w_i$ appears in only finitely many of the linear forms above. Moreover, it is clear that the form $f_r$ specializes to $g$ under this map, and so we have an embedding $W \to V(r)$.
\end{proof}

\subsection{Proof of Proposition~\ref{prop:main-fin}: the general case}

Fix a non-degenerate cubic space $(V, f)$ of residual rank $r$. We must show that any cubic space of dimension $\le \aleph_0$ and residual rank $\le r$ embeds into $V$. Our strategy is simply to embed the cubic space $V(r)$ into $V$. This will establish the proposition by Lemma~\ref{lem:main-fin-1}. We again write $\ol{q}$ and $\ol{Q}$ for $\ol{q}_f$ and $\ol{Q}_f$.

Let $v_1, \ldots, v_r \in V$ be such that the $\ol{q}(v_i)$ form a basis of $\ol{Q}$. Consider a finite table $T$
\begin{center}
\begin{tabular}{l|l}
$v_1$ & $w_{1,1}$, $w_{1,2}$, \ldots, $w_{1,n_1}$ \\
$v_2$ & $w_{2,1}$, $w_{2,2}$, \ldots, $w_{2,n_2}$ \\
\vdots & \vdots \\
$v_r$ & $w_{r,1}$, $w_{r,2}$, \ldots, $w_{r,n_r}$ \\
\hline
& $u_1$, \ldots, $u_m$
\end{tabular}
\end{center}
with entries in $V$. We say that $T$ is \defn{good} if the following conditions hold:
\begin{enumerate}
\item The entries are linearly independent.
\item We have
\begin{displaymath}
\langle v_i, w_{i,j}, w_{i,j} \rangle_f=1, \qquad \langle u_i, u_i, u_i \rangle_f=1.
\end{displaymath}
Furthermore, if $x$, $y$, and $z$ are entries of $T$ and $\langle x, y, z \rangle_f \ne 0$ then, after applying a permutation, we are in one of three cases: (i) $x=v_i$ and $y=z=w_{i,j}$; (ii) $x=y=z=u_i$; (iii) $x,y,z \in \{v_1, \ldots, v_r\}$.
\item If $a$ is in the right side of the table (i.e., not one of the $v_i$'s) then $f_a$ has finite strength.
\end{enumerate}
(The notion of ``good'' used in \S \ref{s:rrk-inf} will not be used in \S \ref{s:rrk-fin}.)

\begin{lemma} \label{lem:main-fin-2-longer-row}
Given a good table $T$ and an index $1 \le \ell \le r$, we can find a good table $T \subset T'$ with longer $\ell$th row.
\end{lemma}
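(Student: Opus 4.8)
The plan is to mimic the argument of Lemma~\ref{lem:main-inf-2}, producing a single new vector $x$ that extends the $\ell$th row, but now being careful to control the extra structure present in the finite-rank setting (the cube part and the condition that derivatives of right-side entries have finite strength). Without loss of generality take $\ell=1$. Let $V_0\subset V$ be the span of all entries of $T$, let $\lambda_1,\ldots,\lambda_N$ be linear functionals on $V$ whose joint kernel $V_1$ is a chosen complement of $V_0$, exactly as in the proof of Lemma~\ref{lem:main-inf-2}. The new vector $x$ should be forced to satisfy: $f(x)=0$; $f_{v_1}(x)=3$, i.e.\ $\langle v_1,x,x\rangle_f=1$; $f_{v_i}(x)=0$ for $2\le i\le r$; $f_a(x)=0$ for every right-side entry $a$ (the $w_{i,j}$'s and $u_i$'s); $\langle a,b,x\rangle_f=0$ for all $a,b$ in $T$; and $\lambda_i(x)=0$ for all $i$. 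These are conditions of the form ``a cubic form vanishes'' ($f$), ``a quadratic form takes a prescribed value'' ($f_{v_i}$, which is $P_2$), and ``a linear form vanishes'' ($f_a(x)$ is linear in $x$ for fixed... wait—$f_a$ is quadratic; but after also imposing $\langle a,b,x\rangle_f=0$ and $f_b(x)=0$ these bilinear-in-$x$ terms are what we need). The crucial point: to apply Corollary~\ref{cor:univ} we must check that $f$ has infinite strength and that the relevant quadratic forms $f_{v_1},\ldots,f_{v_r}$ and $f_a$ separate into ``infinite collective strength'' and ``finite strength'' pieces appropriately.

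Here is where the good-table hypotheses do the work. By non-degeneracy, $f$ has infinite strength. The forms $f_{v_1},\ldots,f_{v_r}$ have residual-rank images $\ol q(v_1),\ldots,\ol q(v_r)$ forming a basis of $\ol Q$, so any nonzero $k$-linear combination $\sum c_i f_{v_i}$ has nonzero image in $\ol P_2(V)$, hence infinite strength; thus $f_{v_1},\ldots,f_{v_r}$ have infinite collective strength. Meanwhile, for each right-side entry $a$, condition~(c) of goodness says $f_a$ has finite strength. So the data ``$f$ (infinite-strength cubic), $\{f_{v_i}\}$ (infinite collective strength quadratics), $\{f_a : a$ right-side$\}$ (finite-strength quadratics), $\{\lambda_i\}$ and the bilinear functionals $\langle a,b,-\rangle_f$ with $a,b\in T$ (linear forms)'' is exactly in the shape required by Corollary~\ref{cor:univ}: apply it with $a=0$, with $b_1=1/3\cdot 3 = 1$... concretely, set the value of $f_{v_1}$ to $3$ and all other $f_{v_i}$ to $0$. (The bilinear forms $\langle a,b,-\rangle_f$ are genuinely linear in the last slot, so they go into the ``$\ell_i$'' slot of the corollary; note $f_{v_i}(x)$ and $\langle v_i,v_i,x\rangle_f$ are compatible since $\langle -,-\rangle_{f_{v_i}}=3\langle v_i,-,-\rangle_f$.) This produces the desired $x\in V$.

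It remains to verify that $T':=T$ with $x$ appended to row~1 is good. Linear independence: $x\in V_1$ by the $\lambda_i(x)=0$ conditions, and $x\ne 0$ since $\langle v_1,x,x\rangle_f=1$; so $x$ is independent of $V_0$, giving~(a). Condition~(b): the new nonzero triple values involving $x$ are exactly $\langle v_1,x,x\rangle_f=1$, which is case~(i) with the new $w_{1,n_1+1}=x$; all other triples $\langle a,b,x\rangle_f$ with $a,b\in T$ vanish by construction, $\langle v_i,x,x\rangle_f=\tfrac13 f_{v_i}(x)=0$ for $i\ge 2$, $\langle a,x,x\rangle_f=\tfrac13 f_a(x)=0$ for right-side $a$, and $\langle x,x,x\rangle_f=\tfrac13 f_x(x)$—hmm, this last one must also vanish, so I need to add the condition $f(x)=0$, which gives $\langle x,x,x\rangle_f = f(x)=0$ up to scalar. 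Good. Condition~(c): $x$ is being added to a \emph{left} row (row~1, the $v$-side is rows $1,\ldots,r$)—wait, no: $x$ becomes $w_{1,n_1+1}$, a \emph{right}-side entry of row~1. So I must check $f_x$ has finite strength. This is the one genuinely new point compared to Lemma~\ref{lem:main-inf-2}, and it is the main obstacle: I expect to handle it by observing that $\ol{q}(x)=\ol f_x$ can be controlled, exactly as in the infinite-rank proof, by additionally imposing that $x$ lies in the joint kernel of functionals $\mu_1,\ldots,\mu_M$ cutting out the preimage of a complement $\ol Q_1$ of $\ol Q_0=\mathrm{span}\{\ol q(\text{entries of }T)\}$ inside $\ol Q$—but since here $\ol Q=\ol Q_0$ already (the $\ol q(v_i)$ already span $\ol Q$!), this forces $\ol q(x)\in\ol Q_0$, which is not yet ``finite strength''. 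The resolution: $\ol q(v_1),\ldots,\ol q(v_r)$ is a basis of $\ol Q$, so $\ol q(x)=\sum c_i\ol q(v_i)$ for scalars $c_i$; we can additionally impose, via Corollary~\ref{cor:univ} applied to the finitely many quadratic forms $f_x - \sum c_i f_{v_i}$... no, $c_i$ are unknowns. Instead, impose directly that $\ol q(x)=0$: pick linear functionals $\mu_1,\ldots,\mu_r$ on $V$ whose joint kernel is $\{v : \ol q(v)=0\}$ (this kernel has finite codimension $r$ since $\ol q$ has image $\ol Q$ of dimension $r$), and add $\mu_i(x)=0$ to the list. Then $\ol f_x = \ol q(x)=0$, i.e.\ $f_x$ has finite strength, giving~(c). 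These $\mu_i$ are linear forms, so they fit into Corollary~\ref{cor:univ} with no trouble, and the whole argument goes through. I would double-check that adding these $r$ constraints keeps the system solvable—it does, since Corollary~\ref{cor:univ} accommodates arbitrarily many linear constraints.
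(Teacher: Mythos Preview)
Your proposal is correct and follows essentially the same approach as the paper: reduce to $\ell=1$, impose via Corollary~\ref{cor:univ} the conditions $f(x)=0$, $f_{v_1}(x)=3$, $f_{v_i}(x)=0$ for $i\ge 2$, $f_a(x)=0$ for right-side entries $a$, $\langle a,b,x\rangle_f=0$, $\lambda_i(x)=0$, and $\mu_i(x)=0$ where $\mu_1,\ldots,\mu_r$ are the components of $\ol q\colon V\to\ol Q\cong k^r$. Your identification of why Corollary~\ref{cor:univ} applies---the $f_{v_i}$ have infinite collective strength because their images form a basis of $\ol Q$, while the $f_a$ for right-side $a$ have finite strength by condition~(c)---is exactly the point, and your eventual resolution of~(c) for the new entry (impose $\ol q(x)=0$ via $r$ linear functionals) matches the paper's argument precisely.
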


\begin{proof}
It suffices to treat the case $\ell=1$. Let $V_0$ be the span of the entries of $T$, let $V_1$ be a complementary space to $V_0$, and let $\lambda_1, \ldots, \lambda_N$ be the components of the map $V \to V/V_1 \cong k^N$. Let $\mu_1, \ldots, \mu_r$ be the components of the map $\ol{q} \colon V \to \ol{Q} \cong k^r$.

By Corollary~\ref{cor:univ}, we can find an element $x$ of $V$ satisfying the following conditions:
\begin{enumerate}[(1)]
\item $f(x)=0$.
\item $f_{v_1}(x)=3$.
\item $f_{v_i}(x)=0$ for $2 \le i \le r$.
\item $f_a(x)=0$ for all $a$ on the right side of $T$.
\item $\langle a, b, x \rangle_f=0$ for all $a$ and $b$ in $T$.
\item $\lambda_i(x)=0$ for all $1 \le i \le N$.
\item $\mu_i(x)=0$ for all $1 \le i \le r$.
\end{enumerate}
Let $T'$ be obtained from $T$ by adding $x$ to the first row. We claim that $T'$ is good. By (6), $x$ belongs to $V_1$, and by (2) it is non-zero, and so $x$ is linearly independent from the entries in $T$. Thus $T'$ satisfies (a). It is clear that $T'$ satisfies (b). By (7), $\ol{q}(x)=0$, i.e., $f_x$ has finite strength. Thus $T'$ satisfies (c).
\end{proof}

\begin{lemma} \label{lem:main-fin-3}
Given a good table $T$, we can find a good table $T \subset T'$ with longer final row.
\end{lemma}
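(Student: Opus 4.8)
The plan is to mimic the proof of Lemma~\ref{lem:main-fin-2-longer-row}: we produce a single new vector $x \in V$ and append it to the final row of $T$ as $u_{m+1}$. As in that proof, let $V_0$ be the span of the entries of $T$, let $V_1$ be a complementary subspace, and let $\lambda_1, \ldots, \lambda_N$ be the components of the map $V \to V/V_1 \cong k^N$; also let $\mu_1, \ldots, \mu_r$ be the components of $\ol q \colon V \to \ol Q \cong k^r$. We seek $x$ satisfying: (1) $f(x) = 1$; (2) $f_{v_i}(x) = 0$ for $1 \le i \le r$; (3) $f_a(x) = 0$ for every $a$ on the right side of $T$; (4) $\langle a, b, x\rangle_f = 0$ for all $a,b$ in $T$; (5) $\lambda_i(x) = 0$ for $1 \le i \le N$; (6) $\mu_i(x) = 0$ for $1 \le i \le r$.

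The point that makes Corollary~\ref{cor:univ} applicable is that the quadratic forms $f_{v_1}, \ldots, f_{v_r}$ have infinite collective strength: by the choice of the $v_i$, their images $\ol q(v_i) = \ol{f}_{v_i}$ form a basis of $\ol Q \subset \ol P_2(V)$, so every nontrivial linear combination of the $f_{v_i}$ has nonzero image in $\ol P_2(V)$ and hence infinite strength. On the other hand, for $a$ on the right side of $T$ the form $f_a$ has finite strength by condition (c) of goodness, and conditions (4)--(6) are all linear in $x$. Since $V$ is non-degenerate, $f$ has infinite strength, so Corollary~\ref{cor:univ} provides a vector $x$ satisfying (1)--(6): we take the cubic value to be $1$, the values of the infinite-collective-strength quadratics $f_{v_i}$ to be $0$, the values of the finite-strength quadratics $f_a$ ($a$ on the right side) to be $0$, and all the listed linear forms to be $0$.

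It remains to verify that $T'$, obtained from $T$ by appending $x = u_{m+1}$, is good. By (5) the vector $x$ lies in $V_1$, and by (1) it is nonzero; hence it is linearly independent from the entries of $T$, so (a) holds. For (b): $\langle x,x,x\rangle_f = f(x) = 1$ by (1); for each entry $a$ of $T$ we have $\langle a, x, x\rangle_f = \tfrac13 f_a(x) = 0$ by (2)--(3); and $\langle a, b, x\rangle_f = 0$ for all $a,b$ in $T$ by (4). Thus every triple of entries of $T'$ on which $\langle-,-,-\rangle_f$ is nonzero either lies entirely in $T$ (and is then of type (i), (ii), or (iii) by goodness of $T$) or equals $(x,x,x)$ up to permutation (type (ii) with $u_{m+1} = x$). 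For (c): the forms $f_a$ for $a$ on the right side of $T$ still have finite strength, and (6) says $\ol q(x) = 0$, i.e.\ $f_x$ has finite strength. Hence $T'$ is good.

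There is no genuinely hard step here; the argument runs parallel to Lemma~\ref{lem:main-fin-2-longer-row}, the only changes being that we normalize $f(x) = 1$ (so that the $u$-condition $\langle x,x,x\rangle_f = 1$ in (b) holds) instead of $f(x) = 0$, and that imposing $\mu_i(x) = 0$ places the new entry correctly on the right side of the table. The one thing worth isolating is the infinite collective strength of $f_{v_1}, \ldots, f_{v_r}$ --- which is exactly what choosing the $v_i$ so that the $\ol q(v_i)$ form a basis of $\ol Q$ guarantees --- since this is what licenses the use of Corollary~\ref{cor:univ}.
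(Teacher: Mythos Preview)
Your proof is correct and follows essentially the same approach as the paper's own proof; the paper merely combines your conditions (2) and (3) into a single ``$f_a(x)=0$ for all $a$ in $T$'' and leaves the verification of (b) and the infinite-collective-strength observation implicit. Your explicit remark that the $\ol q(v_i)$ forming a basis of $\ol Q$ is what guarantees infinite collective strength of the $f_{v_i}$ (and hence licenses Corollary~\ref{cor:univ}) is a welcome clarification.
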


\begin{proof}
Use notation as in the previous proof. By Corollary~\ref{cor:univ}, we can find $x \in V$ satisfying the following conditions:
\begin{enumerate}[(1)]
\item $f(x)=1$
\item $f_a(x)=0$ for all $a$ in $T$.
\item $\langle a, b, x \rangle_f=0$ for all $a$ and $b$ in $T$.
\item $\lambda_i(x)=0$ for all $1 \le i \le N$.
\item $\mu_i(x)=0$ for all $1 \le i \le r$.
\end{enumerate}
Let $T'$ be obtained from $T$ by adding $x$ to the final row. We claim that $T'$ is good. Indeed, the element $x$ is linearly independent from the entries of $T$ by (1) and (4). Conditions (1), (2), and (3) guarantee point (b). By (5) we have $\ol{q}(x)=0$, i.e., $f_x$ has finite strength, and so $T'$ satisfies (c).
\end{proof}

The next lemma completes the proof of Proposition~\ref{prop:main-fin}.

\begin{lemma} \label{lem:main-fin-4}
The cubic space $V(r)$ embeds into $V$.
\end{lemma}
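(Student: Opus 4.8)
The plan is to build an embedding $V(r) \to V$ by the same table-enlargement strategy used in the infinite case, but now tracking the extra structure in the right-hand column. First I would start with the trivial table $T_0$ consisting of just the rows $v_1, \ldots, v_r$ (chosen so the $\ol{q}(v_i)$ form a basis of $\ol{Q}$) with empty $w$-rows and empty final row; one checks directly that $T_0$ is good, using that $f$ has infinite strength so $f(x)$-type conditions and vanishing-of-trilinear-form conditions can be solved, together with Proposition~\ref{prop:rrk-formula} to see that $\langle v_i, v_j, v_k\rangle_f$ contributes only to the case-(iii) terms. Then, alternately applying Lemma~\ref{lem:main-fin-2-longer-row} (to lengthen each of the $r$ top rows) and Lemma~\ref{lem:main-fin-3} (to lengthen the final row), I would construct a chain $T_0 \subset T_1 \subset T_2 \subset \cdots$ of good tables in which every top row grows without bound and the final row grows without bound.

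Let $T_\infty$ be the union of the chain: it has $r$ top rows $v_i$, each carrying an infinite sequence $w_{i,1}, w_{i,2}, \ldots$, plus an infinite final row $u_1, u_2, \ldots$. Let $W \subset V$ be the span of all entries of $T_\infty$. By condition (a) of goodness (applied in the limit), the entries form a basis of $W$, so $W$ has dimension $\aleph_0$. The key point is then to identify the restriction $f|_W$: by condition (b), the only nonvanishing values of $\langle -,-,-\rangle_f$ on basis triples are $\langle v_i, w_{i,j}, w_{i,j}\rangle_f = 1$, $\langle u_i, u_i, u_i\rangle_f = 1$, and the case-(iii) values $\langle v_i, v_j, v_k\rangle_f$.

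There is a subtlety here that I expect to be the main obstacle: the case-(iii) terms mean $f|_W$ is not literally $f_r$, but rather $f_r$ plus some cubic $c$ in the finitely many variables $x_1, \ldots, x_r$. I would handle this by noting that $c$ is a form of finite strength (it lives in a finite-dimensional polynomial ring), so $f_r + c$ and $f_r$ have the same residual rank $r$ and $f_r + c$ still has infinite strength; indeed, I would want to arrange the embedding more carefully, or absorb $c$ by a linear change of the $x_i$-coordinates. A cleaner route: after extracting $W$, apply Lemma~\ref{lem:main-fin-1} directly to the cubic space $(W, f|_W)$, since $W$ has dimension $\le \aleph_0$ and $\rrk(W) \le \rrk(V) = r$ by Proposition~\ref{prop:rrk-embed}; this gives an embedding $W \to V(r)$, which is the wrong direction. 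So instead I would argue that $(W, f|_W)$ is \emph{isomorphic} to $V(r)$: the change of coordinates on $P_1(W)$ sending $x_i \mapsto x_i$, $y_{i,j}\mapsto y_{i,j}$, $z_i \mapsto z_i$ realizes $f_r$, and the discrepancy $c(x_1,\ldots,x_r)$ is killed by completing the cube, i.e. replacing each $x_i$ by $x_i$ minus a suitable quadratic in the remaining $x$'s and $y$'s — more simply, since $q_i = \sum_j y_{i,j}^2$ has infinite strength in $W$, the form $3\sum x_i q_i + c$ is isomorphic to $3 \sum x_i q_i$ by an invertible substitution $x_i \mapsto x_i + (\text{correction})$. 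Once this isomorphism $(W, f|_W) \cong V(r)$ is established, the inclusion $W \hookrightarrow V$ composed with its inverse $V(r) \xrightarrow{\sim} W$ gives the desired embedding $V(r) \to V$, completing the proof of Proposition~\ref{prop:main-fin} via Lemma~\ref{lem:main-fin-1}.
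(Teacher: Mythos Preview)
Your overall architecture matches the paper: build a chain of good tables, take the union $T_\infty$, let $W$ be its span, and observe that $f|_W$ differs from $f_r$ only by an extra cubic $c(x_1,\ldots,x_r)$ coming from the case-(iii) terms. Where your proposal breaks down is in the final step of eliminating $c$.

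Your ``completing the cube'' suggestion replaces $x_i$ by $x_i$ minus a \emph{quadratic} expression; that is not a linear change of coordinates, so it does not give an isomorphism of cubic spaces. Your alternative, an invertible \emph{linear} substitution $x_i \mapsto x_i + (\text{correction})$, also fails: a linear shift of the $x_i$'s sends $3\sum x_i q_i$ to $3\sum x_i q_i + 3\sum a_i q_i$ (terms involving $y$'s) and sends $c(x)$ to another cubic in the $x$'s, so nothing cancels. There is no evident linear automorphism of $W$ carrying $f|_W$ to $f_r$, and you have not produced one.

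The paper resolves this differently and more simply: it does not try to show $W \cong V(r)$. Instead it writes $c = \sum_{i=1}^m \ell_i^3$ as a sum of cubes of linear forms in $x_1,\ldots,x_r$ (possible since $k$ is algebraically closed of characteristic $\ne 3$), and then passes to the \emph{subspace} $W' = \bigcap_{i=1}^m \ker(z_i + \ell_i) \subset W$. On $W'$ one has $z_i^3 = -\ell_i^3$ for $i \le m$, so the first $m$ terms of $\sum z_i^3$ exactly cancel $c$, and $f|_{W'}$ is precisely $3\sum_{i=1}^r x_i q_i + \sum_{i>m} z_i^3 \cong f_r$. The point you missed is that the infinite supply of $z$-variables is there precisely to absorb this finite cubic defect; you don't need an isomorphism $W \cong V(r)$, only an embedded copy of $V(r)$ inside $W$.
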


\begin{proof}
Let $T_1 \subset T_2 \subset \cdots$ be good tables with each row growing larger, and let $T_{\infty}$ be their union. Let $W$ be the span of the entries of $T_{\infty}$. The entries of $T_{\infty}$ form a basis for $W$. Let $x_1, \ldots, x_r$ be the coordinates of $v_1, \ldots, v_r$, let $y_{i,j}$ be the coordinate of $w_{i,j}$, and let $z_i$ be the coordinate of $u_i$. The restriction of $f$ to $W$ has the form
\begin{displaymath}
g(x_1, \ldots, x_r) + \sum_{i=1}^r x_i q_i + (z_1^3+z_2^3+\cdots)
\end{displaymath}
where $g$ is some cubic and $q_i=\sum_{j \ge 1} y_{i,j}^2$. Write $g=\sum_{i=1}^m \ell_i^3$ where the $\ell_i$ are linear forms in the $x_j$'s. Let $W'$ be the subspace of $W$ given by $\bigcap_{i=1}^m \ker(z_i+\ell_i)$. We obtain a basis for $W'$ by simply removing $u_1, \ldots, u_m$ from the basis for $W$. The restriction of $f$ to $W'$ is given by
\begin{displaymath}
\sum_{i=1}^r x_iq_i + (z_{m+1}^3+z_{m+2}^3+\cdots)
\end{displaymath}
We thus see that $W'$ is isomorphic to $V(r)$, which completes the proof.
\end{proof}

\end{document}